\begin{document}


\markboth{Z.Z. Zhang,  W.H. Deng, and H.T. Fan}{Finite difference schemes for the tempered fractional Laplacian}
\title{Finite difference schemes for the tempered fractional Laplacian}



\author[Zhijiang Zhang, Weihua Deng, and HongTao Fan]{Zhijiang Zhang\affil{1}, Weihua Deng\affil{1}\comma\corrauth, HongTao Fan\affil{2}}
\address{\affilnum{1}\ School of Mathematics and Statistics, Gansu Key Laboratory of Applied Mathematics and Complex Systems, Lanzhou
University, Lanzhou 730000, P.R. China\\
\affilnum{2}\ College of Science, Northwest A{\&}F University, Yangling 712100, Shaanxi, P.R. China}

\emails{{\tt dengwh@lzu.edu.cn} (Weihua Deng)}

\begin{abstract}
The second and all higher order moments of the $\beta$-stable L\'{e}vy process diverge, the  feature of which is sometimes referred to as shortcoming of the model when applied to physical processes. So, a parameter $\lambda$ is introduced to exponentially temper the L\'{e}vy process. The generator of the new process is tempered fractional Laplacian $(\Delta+\lambda)^{\beta/2}$
[W.H. Deng, B.Y. Li, W.Y. Tian, and P.W. Zhang, Multiscale Model. Simul., in press, 2017].
%
 In this paper, we first design the finite difference schemes for the tempered fractional Laplacian equation with the generalized Dirichlet type boundary condition, their accuracy depending on the regularity of the exact solution on $\bar{\Omega}$. Then the techniques of effectively solving the resulting algebraic equation are presented, and the performances of the schemes are demonstrated by several numerical examples.

\end{abstract}

\keywords{tempered fractional Laplacian, finite difference method, preconditioning.}

\ams{35R11, 65M06, 65F08}

\maketitle

\section{Introduction}\label{sec1}

The fractional Laplacian $\Delta^{\beta/2}$ is the generator of the $\beta$-stable L\'{e}vy process, in which the random displacements executed by jumpers are able to walk to neighboring or nearby sites, and also perform excursions to remote sites by way of L\'{e}vy flights \cite{Pozrikidis:16,Applebaum:09,Peszat:07}. The distribution of the jump length of $\beta$-stable L\'{e}vy process obeys the isotropic power-law measure $|x|^{-n-\beta}$, where $n$ is the dimension of the space. The extremely long jumps of the process make its second and higher order moments divergent, sometimes being referred to as a shortcoming when it is applied to physical model in which one expects regular behavior of moments \cite{Zaburdaev:15}.
The natural idea to damp the extremely long jumps is to introduce a small damping parameter $\lambda$ to the distribution of jump lengths, i.e., $e^{-\lambda |x|}|x|^{-n-\beta}$. With small $\lambda$, for short time, it displays the dynamics of L\'{e}vy process, while for sufficiently long time the dynamics will transit slowly from superdiffusion to normal diffusion. The generator of the tempered L\'{e}vy process is the tempered fractional Laplacian $(\Delta+\lambda)^{\beta/2}$ \cite{Deng:17}. The tempered fractional Laplacian equation governs the probability distribution function of the position of the particles.


This paper focuses on developing the finite difference schemes for the tempered fractional Laplacian  equation
\begin{eqnarray}\label{Laplacedirichelt}
\left\{\begin{array}{ll}
-(\Delta +\lambda)^ {\beta/2}u(x)=f(x),&~~ x\in \Omega,\\
u(x)=g(x),&~~x\in \mathbb{R}\backslash\Omega,
\end{array}\right.
\end{eqnarray}
where  $\beta\in (0,2),\,\lambda\ge0,\Omega=(a,b) $,  and
\begin{eqnarray}\label{temperedLaplacianoperator}
(\Delta +\lambda)^{\beta/2}u(x):=
-c_{\beta} {~\rm P.V.~} \int_{\mathbb{R}}\frac{u(x)-u(y)}{e^{\lambda|x-y|}|x-y|^{1+\beta}}dy
\end{eqnarray}
with
\begin{eqnarray}
c_{\beta}=\left\{\begin{array}{ll}
\frac{\beta\Gamma(\frac{1+\beta}{2})}{2^{1-\beta}\pi^{1/2}\Gamma(1-\beta/2)}& ~{\rm for~}\lambda=0
 ~{\rm or}~ \beta=1,\\[3pt] \frac{\Gamma(\frac{1}{2})}{2\pi^{\frac{1}{2}}|\Gamma(-\beta)|} &~{\rm for}~\lambda>0~{\rm and}~\beta\not=1,
\end{array}\right.
\end{eqnarray}
and ${~\rm P.V.~}$ being the limit of the integral over
$\mathbb{R}\backslash B_{\epsilon} (x)$ as $\epsilon\to 0$. The tempered operator in
(\ref{temperedLaplacianoperator}) is the generator of the
 tempered  symmetric $\beta$-stable L\'{e}vy process $x(t)$ determined by the L\'{e}vy-Khintchine representation
\begin{eqnarray}
{\bf{E}}(e^{i\omega x})=\exp\left(t\int_{\mathbb{R}\backslash\{0\}} \left(e^{i\omega y}-1-i\omega y \chi_{\{|y|<1\}}\right)   \nu (dy) \right).
\end{eqnarray}
Here ${\bf{E}}$ is the expectation,
$ i=\sqrt{-1},\,\chi_{S}:=\left\{\begin{array}{ll} 1,&x\in S,\\0,&s\notin S\end{array}\right.$,
and $\nu (dy)=e^{-\lambda|y|}|y|^{-\beta-1}dy $ is the tempered L$\acute{\rm e}$vy  measure. The parameter $\lambda \,(>0)$ fixes the decay rate of big jumps, while $\beta$ determines
the relative importance of smaller jumps in the path of the process.
Model (\ref{Laplacedirichelt}) corresponds to the one-dimensional case of the initial and boundary value problem in Eq. (49) recently proposed in \cite{Deng:17}, and the existence and uniqueness of its weak solution have been shown in \cite{Zhang:17}.
Obviously, when $\lambda=0$, (\ref{temperedLaplacianoperator}) reduces to the fractional Laplacian \cite{Pozrikidis:16}
\begin{eqnarray} \label{eqnarrddddd}
(\Delta)^{\beta/2}u(x):=-c_{\beta} {~\rm P.V.~} \int_{\mathbb{R}}\frac{u(x)-u(y)}{|x-y|^{1+\beta}}dy.
\end{eqnarray}
It is well known that for the proper classes of functions that decay quickly enough at infinity,
the fractional Laplacian can be rewritten as the combination of the left and right Riemann-Liouville fractional derivatives ${}_{-\infty}D_x^{\beta}u(x)$ and ${}_xD_{\infty}^{\beta}u(x)$
(the so-called Riesz fractional derivative)\cite{Yang:10}, i.e.,
\begin{eqnarray} \label{fractionaLaplacian}
-\Delta^{\beta/2}u(x)=\frac{{}_{-\infty}D_x^{\beta}u(x)+{}_xD_{\infty}^{\beta}u(x)}{2\cos(\beta\pi/2)},~~~~\beta\not =1.
\end{eqnarray}
The similar result also holds for the tempered fractional Laplacian. In fact, letting $u(x)\in H^{\beta}(\mathbb{R})$ and $\mathscr{F}[u(x)](\omega):=\int_{\mathbb{R}}u(x)e^{-ix\omega}dx$ be its Fourier transform, we have \cite[Propositions 2.1 and 2.2]{Zhang:17}
 \begin{eqnarray}\label{lemma1eq1}
 &&\mathscr{F}\left[(\Delta +\lambda\right)^{\beta/2}u(x)]({\omega})\nonumber\\
 &&~=(-1)^{\lfloor \beta \rfloor}\left(\lambda^\beta-\left(\lambda^2+\left|{\omega}\right|^2\right)^{\frac{\beta}{2}}
 \cos\left(\beta \arctan\left(\frac{\left|{\omega}\right|}{\lambda}\right)\right)\right)\mathscr{F}[u]({\omega}),
\end{eqnarray}
 where $\lfloor\beta\rfloor:=\left\{ z\in \mathbb{N}: 0\le\beta-z<1\right\}$. Note that
\begin{eqnarray}
\frac{1}{2}\left(\lambda^2+\left|{\omega}\right|^2\right)^{\frac{\beta}{2}}\cos\left(\beta \arctan\left(\frac{\left|{\omega}\right|}{\lambda}\right)\right)
=(\lambda +i\omega)^{\beta}+(\lambda -i\omega)^{\beta}.
\end{eqnarray}
 A simple calculation  yields that
\begin{eqnarray}\label{eqnarrddddd2}
-\left(\Delta+\lambda\right)^{\beta/2}u(x)=(-1)^{\lfloor\beta \rfloor}\, \frac{\partial _{x} ^{\beta,\lambda}u(x)+\partial_{-x}^{\alpha, \lambda} u(x)}{2},~~~\beta\not=1,
\end{eqnarray}
where
\begin{eqnarray}
\partial _{x} ^{\beta,\lambda}u(x):=
\left\{\begin{array}{ll}
\mathscr{F}^{-1}\left[\left((\lambda+i\omega)^{\beta}-\lambda^{\beta}\right)\hat{u}(\omega)\right](x),&\beta\in(0,1),\\[5pt]
\mathscr{F}^{-1}\left[\left((\lambda+i\omega)^{\beta}-i\omega {\beta}\lambda^{{\beta}-1}-\lambda^{\beta}\right)\hat{u}(\omega)\right](x),&\beta\in(1,2)
\end{array}\right.
\end{eqnarray}
and
 \begin{eqnarray}
\partial_{-x}^{\alpha, \lambda} u(x):=
\left\{\begin{array}{ll}
\mathscr{F}^{-1}\left[\left((\lambda-i\omega)^{\beta}-\lambda^{\beta}\right)\hat{u}(\omega)\right](x),&\beta\in(0,1),\\[5pt]
\mathscr{F}^{-1}\left[\left((\lambda-i\omega)^{\beta}+i\omega {\beta}\lambda^{{\beta}-1}-\lambda^{\beta}\right)\hat{u}(\omega)\right](x),&\beta\in(1,2)
\end{array}\right.
\end{eqnarray}
are  the left and right  normalized tempered  Riemann-Liouville fractional operations being given in
  \cite{Sabzikar:15,Baeumer:10}, and their representations  in real space can be founded in   \cite{Lican:15}.

Nowadays, many finite difference schemes have been proposed to solve equations with the Riemann-Liouville type fractional derivatives
in (\ref{fractionaLaplacian}) or (\ref{eqnarrddddd2}) under zero boundary conditions \cite{Baeumer:10,Lican:15,Meerschaert:04,Sabzikar:15,Zhao:14,Zhao:16},
 which usually are constructed based on the Gr{\"u}nwal formula or its variants; and there are also a lot of discussions on time-fractional operators or other numerical methods, e.g., \cite{HuangJF:14, Sheng:17}. To the best of our knowledge, it seems that very few numerical schemes are based on the singular integral definition (\ref{eqnarrddddd}) to approximate the fractional Laplacian. In \cite{Gao:14}, to study the mean exit time and  escape probability of the dynamical systems driven by non-Gaussian L\'{e}vy noises,
 the fractional Laplacian is  approximated numerically by   a ``punched-hole'' trapezoidal rule.
 The finite difference and finite element methods with systemically theoretical analysis for solving model (\ref{Laplacedirichelt}) with $\lambda=0$
  are presented in \cite{Huang:14} and \cite{Acosta:171}, respectively. Usually, even for problems  with $g(x)=0$, to preform  the convergence analysis,  the finite element methods
   refer to  the regularity of the exact solution on $\Omega$ \cite{Acosta:171,Zhang:17}  while the finite difference methods require the regularity
  on the whole line \cite{Baeumer:10,Meerschaert:04,Lican:15,Sabzikar:15,Huang:14}. The finite difference schemes provided in this paper for the tempered fractional Laplacian equation (\ref{Laplacedirichelt}) just depend on the regularity of $u(x)$ on $\bar{\Omega}$. We give the detailedly theoretical analysis and effective algorithm implementation.

The rest of this paper is organized as follows. In Section \ref{sec2} we discuss the numerical
 schemes of (\ref{Laplacedirichelt}) with $\beta\in (0,1)$. We first derive the finite difference discretizations of the
 tempered fractional Laplacian based on the singular integral definition (\ref{temperedLaplacianoperator}), and then
 give convergence analysis and the related implementation techniques for  solving the resulting algebraic equation with preconditioning.
 Two types of preconditioners are considered. In Section \ref{sec3}, we extend the suggested finite difference schemes to the case $\beta\in [1,2)$, and
 most of the results and implementation techniques still hold.
 Numerical simulations are presented in Section \ref{sec4} and we conclude the paper in Section \ref{sec5}.

Throughout the paper by the notation $A\lesssim B$ we mean that $A$ can be bounded by a multiple of $B$,
independent of the parameters they may depend on, while the expression $A\simeq B$ means that $A\lesssim B\lesssim A$.
We also use $C$ to denote a constant, which may be different for different lines.

\section{Finite difference scheme for the case $\beta\in (0,1)$}\label{sec2}
In this section, we discuss the finite difference scheme for the model
(\ref{Laplacedirichelt}) with $\beta\in (0,1)$.
\subsection{Derivation of the scheme }\label{subsec21}
Let $\Omega=(a,b)$ and $s, s_1\in \{0,1\}$. We partition $\Omega$ uniformly  into $a=x_0<x_1<\cdots <x_{M}<x_{M+1}=b$
with $x_i=a+ih, \,h=\frac{b-a}{M+1}$. Then  for $i=1,2,\cdots,M$, we have
\begin{eqnarray}
&&\int_{-\infty}^{\infty} \frac{u(x_i)-u(y)}{e^{\lambda|x_i-y|}\left|x_i-y\right|^{1+\beta}}dy\nonumber\\
&&~=\int_a^{x_{i-1}}g_1(i,s,y)(x_i-y)^{s-1-\beta}dy+\int_{x_{i+1}}^b g_2(i,s,y)(x_i-y)^{s-1-\beta} dy \nonumber\\
&&~~+\int_0^hg_3(i,s_1,y)y^{s_1-1-\beta}dy+\left(B_1(i)+B_2(i)\right)u(x_i)-d_1(i)-d_2(i),
\end{eqnarray}
where
\begin{eqnarray}
&&g_1(i,s,y):=\frac{\left({u(x_i)-u(y)}\right)}{(x_i-y)^{s}}{e^{\lambda(y-x_i)}},~~y\in [a,x_{i-1}],\label{bianliangdaihua1}\\
&& g_2(i,s,y):=\frac{\left({u(x_i)-u(y)}\right)}{(y-x_i)^{s}}{e^{\lambda(x_i-y)}},~~y\in [x_{i+1},b],\\
 &&g_3(i,s,y):=\frac{\left(2u(x_i)-u(x_i-y)-u(x_i+y)\right)}{y^{s_1}}e^{-\lambda y},~~y\in [0,h],\label{bianliangdaihua2}
 \end{eqnarray}
 and
 \begin{eqnarray}
&&B_1(i)=\int_{-\infty}^a\frac{e^{\lambda(y-x_i)}}{(x_i-y)^{1+\beta}}dy,~~~~ B_2(i)=\int_b^{\infty}\frac{e^{\lambda(x_i-y)}}{(y-x_i)^{1+\beta}}dy,\\
&&d_1(i)=\int_{-\infty}^a \frac{g(y)}{e^{\lambda(x_i-y)}(x_i-y)^{1+\beta}}dy, ~~d_2(i)=\int_b^{\infty} \frac {g(y)}{e^{\lambda(y-x_i)}(y-x_i)^{1+\beta}}dy.~~~~
\end{eqnarray}
Define an interpolation operator by
\begin{eqnarray} \label{edde33sseee}
I_{\left[a_1,a_2\right]}\left(p(y)\right):=\frac{y-a_1}{h}p(a_2)+\frac{a_2-y}{h}p(a_1),~~y\in [a_1,a_2].
\end{eqnarray}
We approximate the term
 $ \int_a^{x_{i-1}}g_1(i,y)(x_i-y)^{s-1-\beta}dy$
  by
\begin{eqnarray}\label{discretizationcoeff1}
&&\mathcal{A}_1(s,g_1)=\sum_{k=1}^{i-1}\int_{x_{k-1}}^{x_{k}}I_{\left[x_{k-1},\,x_k\right]}\left(g_1(i,s,y)\right)\,(x_i-y)^{s-1-\beta}dy,~~~~~~\nonumber\\
&&~~~~~~~~~~~~~~~=\sum_{k=1}^{i-1}h^s\left(g_1(i,s,x_{k-1})A_1(i,s,k)+g_1(i,s,x_k)A_2(i,s,k)\right)
\end{eqnarray}
with
\begin{eqnarray}
&&A_1(i,s,k)=\frac{1}{h^{s+1}}\int_{x_{k-1}}^{x_k}(x_k-y)(x_i-y)^{s-1-\beta}dy>0,\label{coeffieedddeddee1}\\
&&A_2(i,s,k)=\frac{1}{h^{s+1}}\int_{x_{k-1}}^{x_k}(y-x_{k-1})(x_i-y)^{s-1-\beta} dy>0;\label{coeffieedddeddee2}
\end{eqnarray}
the term
$\int_{x_{i+1}}^b g_2(i,y)(y-x_i)^{s-1-\beta} dy$ is approximated
 by
\begin{eqnarray}
&&\mathcal{A}_2(s,g_2)= \sum_{k=i+2}^{M+1}\int_{x_{k-1}}^{x_k}I_{\left[x_{k-1},\,x_k\right]}\left(g_2(i,s,y)\right)\,(y-x_i)^{s-1-\beta} dy~~~~~~~~\nonumber\\
&&~~~~~~~~~~~~~~~=\sum_{k=i+2}^{M+1}h^s\left(g_2(i,s,x_{k-1})A_3(i,s,k)+g_2(i,s,x_k)A_4(i,s,k)\right)~~~~~~~~~~~\label{discretizationcoeff2}
\end{eqnarray}
with
\begin{eqnarray}
&&A_3(i,s,k)=\frac{1}{h^{s+1}}\int_{x_{k-1}}^{x_k} (x_k-y)(y-x_i)^{s-1-\beta}dy>0,\label{coeffieedddeddee3}\\
&&A_4(i,s,k)=\frac{1}{h^{s+1}} \int_{x_{k-1}}^{x_k}\left(y-x_{k-1}\right) (y-x_i)^{s-1-\beta} dy>0;\label{coeffieedddeddee4}
\end{eqnarray}
 and the term
$ \int_0^h g_3(i,s_1,y) y^{s_1-1-\beta} dy $
 by
\begin{eqnarray}
\mathcal{A}_3(s_1,g_3)=\int_0^h I_{\left[0,\,h\right]}\left(g_3(i,s_1,y)\right)\,y^{s_1-1-\beta}dy=\frac{h^{s_1-\beta}}{s_1-\beta+1} g_3(i,s_1,h). \label{discretizationcoeff3}
\end{eqnarray}

Assume $g_1(i,s,y)\in C^2[a,x_{i-1}],g_2(i,s,y)\in C^2[x_{i+1},b],$ and $g_3(i,s_1,y)\in C^2[0,h]$.
By the Lagrange interpolation error remainder, we have
\begin{eqnarray}
&&\left|\int_a^{x_{i-1}}g_1(i,s,y)(x_i-y)^{s-1-\beta}dy-\mathcal{A}_1(s,g_1)\right|\nonumber\\
&&~\le \frac{\big\|g^{(2)}_1(i,s,y)\big\|_{L^\infty[a,\,x_{i-1}]} h^2}{2\left|s-\beta\right|}\left|h^{s-\beta}-(x_i-a)^{s-\beta}\right|,\label{localtruncation1}\\
&&\left|\int_{x_{i+1}}^b g_2(i,s,y)(y-x_i)^{s-1-\beta} dy-\mathcal{A}_2(s,g_2)\right|\nonumber\\
&&~~\le \frac{\big\|g^{(2)}_2(i,s,y)\big\|_{L^\infty[x_{i+1},\,b]} h^2}{2\left|s-\beta\right|}\left|h^{s-\beta}-(b-x_i)^{s-\beta}\right|, \label{localtruncation2}
\end{eqnarray}
and
\begin{eqnarray}
&&\left|\int_0^h g_3(i,s_1,y) y^{s_1-1-\beta} dy -\mathcal{A}_3(s_1,g_3)\right|\nonumber\\
&&~= \left|\int_0^h\frac{g_3^{(2)}(i,s_1,y)}{2} y(h-y)y^{s_1-1-\beta} dy\right|\nonumber\\
&&~\le \frac{\big\| g_3^{(2)}(i,s_1,y)\big\|_{L^{\infty}([0,h])}h^{s_1-\beta+2}}{2(s_1-\beta+1)(s_1-\beta+2)}.\label{localtruncation3}
\end{eqnarray}
If $u(y)\in C^2(\bar{\Omega})$, it can be noted that
\[\big\|g^{(2)}_1(i,0,y)\big\|_{L^{\infty}[a,x_{i-1}]}, \big\|g_2^{(2)}(i,0,y)\big\|_{L^{\infty}[x_{i+1},b]}, ~{\rm and~} \big\|g_3^{(2)}
(i,0,y)\big\|_{L^{\infty}[0,h]}\]
  are bounded, and the bounds may depend on the values of  $u^{(k)}(y), k=0,1,2$ on $\bar{\Omega}$, but  independent of $h$;
and if $u\in C^{(3)}(\bar{\Omega})$, letting $h(x):=\frac{u(x_i)-u(x)}{x_i-x}$ and using  Taylor's expansion,
\begin{eqnarray}
\left\|h(x)\right\|_{L^\infty[a,x_{i-1}]},\,\big\|h^{(1)}(x)\big\|_{L^\infty[a,x_{i-1}]},\,{\rm and}~\big\|h^{(2)}(x)\big\|_{L^\infty[a,x_{i-1}]}
\end{eqnarray}
also are bounded, and the bounds may depend on the values of  $u^{(k)}(x), k=0,1,2,3$ on $\bar{\Omega}$, but also independent of $h$.  Thus
\begin{eqnarray}
\big\|g^{(2)}_1(i,1,y)\big\|_{L^\infty[a,x_{i-1}]}=\big\|\left(\lambda^2h(x)+2\lambda h^{(1)}(x)+h^{(2)}(x)\right)e^{\lambda(y-x_i)}\big\|_{L^\infty[a,x_{i-1}]}\le C.~~
\end{eqnarray}
Similarly, we have
\begin{eqnarray}\label{errorestimatematrix1}
\big\|g_2^{(2)}(i,1,y)\big\|_{L^{\infty}[x_{i+1},b]}\le C,~~\big\|g_3^{(2)}(i,1,y)\big\|_{L^{\infty}[0,h]}\le C.
\end{eqnarray}
Therefore, combining  (\ref{discretizationcoeff1}), (\ref{discretizationcoeff2}), (\ref{discretizationcoeff3}),
and (\ref{localtruncation1})-(\ref{errorestimatematrix1}),  for $\beta\in (0,1)$ and $i=1,2,\cdots, M$, it follows that
\begin{equation}
\begin{array}{lll}
&&-(\Delta +\lambda)^ {\beta/2}u(x_i)= \mathcal{A}_1(s,g_1)+\mathcal{A}_2(s,g_2)+\mathcal{A}_3(s_1,g_3)\nonumber\\
&&~~~~~~~~~~~~~~~~~~~~~~~~~~~~~~+(B_1(i)+B_2(i))u(x_i)-(d_1(i)+d_2(i))+r_h^i,\label{discretzaitionequation}
\end{array}
\end{equation}
 where
 \begin{eqnarray}\label{localerror1}
 \left|r_h^i\right|=\left\{\begin{array}{l}
 \mathcal{O}(h^{2-\beta}) {~~~~\rm for~}  u\in C^2(\bar{\Omega}), \,s=s_1=0, \\
  \mathcal{O}(h^{2}){~~~~~~~~\rm  for~} u\in C^3(\bar{\Omega}), \,s=s_1=1.
  \end{array}\right.
 \end{eqnarray}
 \begin{remark}
For $\lambda=0$, the bounds of $ \big\|g^{(2)}_1(i,s,y)\big\|_{L^{\infty}[a,x_{i-1}]}, \big\|g_2^{(2)}(i,s,y)\big\|_{L^{\infty}[x_{i+1},b]}$ and
$\big\|g_3^{(2)}(i,s_1,y)\big\|_{L^{\infty}[0,h]}$ only depend on the values of $u^{(2)}(x)$ on $\bar{\Omega}$ for $s=s_1=0$ and the values of $u^{(3)}(x)$ on $\bar{\Omega}$ for $s=s_1=1$.
 \end{remark}
 Define
\begin{eqnarray}
&&F_1:=f(x_1)+d_1(1)+d_2(1)+\frac{h^{-\beta} e^{-\lambda h}}{s_1+1-\beta}u(a)+A_4(1,s,M+1) \frac{e^{-\lambda Mh}}{M^{s}} u(b),~~~~~~~~~~~\\
&&F_M:=f(x_M)+d_1(M)+d_2(M)+\frac{h^{-\beta}e^{-\lambda h}}{s_1-\beta+1} u(b)+A_1(M,s,1)\frac{e^{-\lambda M h}}{M^s}u(a),
\end{eqnarray}
\begin{eqnarray}
&&F_i:=f(x_i)+d_1(i)+d_2(i)+A_1(i,s,1)\frac{e^{-\lambda ih}}{i^s}u(a)~~~~~~~~~~\nonumber\\
&&~~~~~~~~+A_4(i,s,M+1)\frac{e^{-\lambda(M+1-i) h}}{(M+1-i)^{s}} u(b), ~~i=2,3,\cdots,M-1,
\end{eqnarray}
and
\begin{eqnarray}\label{stiffenessmatrix1}
h_{i,j} :=\left\{\begin{array}{ll}
-\left(A_1(i,s,j+1)+A_2(i,s,j)\right)\frac{e^{-\lambda (i-j)h}}{(i-j)^{s}}, & 1\le j\le i-2,\\[3.5pt]
-\frac{h^{-\beta}e^{-\lambda h}}{s_1+1-\beta}-A_2(i,s,i-1)e^{-\lambda h},& j=i-1,\\[3.5pt]
-\frac{h^{-\beta}e^{-\lambda h}}{s_1+1-\beta}-A_3(i,s,i+2)e^{-\lambda h},& j=i+1,\\[3.5pt]
-\left(A_3(i,s,j+1)+A_4(i,s,j)\right)\frac{e^{-\lambda (j-i)h}}{(j-i)^{s}}, &  i+2\le j\le M,
\end{array}\right.
\end{eqnarray}
with the  $h_{i,i}$ satisfying
\begin{eqnarray}
&&h_{i,i}+\sum_{j=1,j\not= i}^M h_{i,j}-B_1(i)-B_2(i)\nonumber\\[3pt]
&&~=\left\{ \begin{array}{ll}
\frac{h^{-\beta}e^{-\lambda h}}{s_1-\beta+1}+A_4(i,s,M+1)\frac{e^{-\lambda (M+1-i) h}}{(M+1-i)^{s}},& i=1,\\[3.5pt]
A_1(i,s,1)\frac{e^{-\lambda i h}}{i^s}+A_4(i,s,M+1)\frac{e^{-\lambda (M+1-i) h}}{(M+1-i)^{s}},& 2\le i\le M-1,\\[3.5pt]
\frac{h^{-\beta}e^{-\lambda h}}{s_1-\beta+1}+A_1(i,s,1)\frac{e^{-\lambda i h}}{i^s},& i=M.
\end{array} \right.~~~~~~~~\label{stiffenessmatrix222}
\end{eqnarray}
Let
 \begin{eqnarray}
 &&{\bf H}:=\left(h_{i,j}\right)_{i,j=1}^M,~~~ U:=\left(u(x_1),u(x_2),\cdots, u(x_M)\right)^{\rm T},\nonumber\\
 &&F:=\left(F_1,F_2,\cdots, F_M\right)^{\rm T},~~R_h:=\left(r_h^1,r_h^2,\cdots, r_h^M\right)^{\rm T}.
 \end{eqnarray}
 Then making use of (\ref{discretzaitionequation}), it holds that
 \begin{eqnarray}\label{exactequation}
 {\bf H}U=F+R_h.
 \end{eqnarray}
 We denote the numerical solution of $u$ at $x_i$ as  $U_i$ and define $U_h:=\left(U_1,U_2,\cdots, U_M\right)^{\rm T}$. By discarding the
 truncation error $R_h$ and replacing $U$ by $U_h$,  the numerical scheme of (\ref{Laplacedirichelt}) can be given as
 \begin{eqnarray}\label{matrixform}
 {\bf H}U_h=F.
 \end{eqnarray}

\subsection{Error Estimates}
By a simple calculation, for $\beta\in (0,1)$,   we have
\begin{eqnarray}\label{stiffenessmatrix2}
\begin{array}{l}
A_1(i,s,j+1)+A_2(i,s,j)=A_3(i,s,j+1)+A_4(i,s,j)\\[4.0pt]
~=C_{\beta,s}\left((2|i-j|^{1-\beta+s}-(|i-j|-1)^{1-\beta+s}-(|i-j|+1)^{1-\beta+s}\right),\\[4.0pt]
A_2(i,s,i-1)=C_{\beta,s}\left(2-\beta+s-2^{1-\beta+s}\right),\\[4.0pt]
A_1(i,s,1)=C_{\beta,s}\left(i^{1-\beta+s}-(i-1)^{1-\beta+s}-(1-\beta+s)i^{-\beta+s}\right),\\[4.0pt]
A_4(i,s,M+1)=C_{\beta,s}\Big((M+1-i)^{1-\beta+s}-(M-i)^{1-\beta+s}\\
~~~~~~~~~~~~~~~~~~-(1-\beta+s)(M+1-i)^{-\beta+s}\Big),
\end{array}
\end{eqnarray}
where $C_{\beta,s}:=\frac{h^{-\beta} }{(\beta-s)(1-\beta+s)}$.
By (\ref{stiffenessmatrix1}) and (\ref{stiffenessmatrix2}), it holds that $h_{i,j}=h_{j,i}$, i.e.,  matrix  ${\bf H}$   is  symmetric.
As for $B_1(i)$ and $B_2(i)$,  when $\lambda=0$,  we have
\begin{eqnarray}\label{integration0}
B_1(i)=\frac{(x_i-a)^{-\beta}}{\beta},~~~B_2(i)=\frac{(b-x_i)^{-\beta}}{\beta};
\end{eqnarray}
when $\lambda>0$, it holds that
\begin{eqnarray}
&&B_1(i)=\frac{e^{-\lambda(x_i-a)}}{\beta(x_i-a)^\beta}+\frac{\lambda}{\beta(1-\beta)}\frac{e^{-\lambda(x_i-a)}}{(x_i-a)^{\beta-1}}\nonumber\\
&&~~~~~+\lambda^{\beta}\Gamma(-\beta)+\frac{\lambda^2}{\beta(1-\beta)}\int_0^{x_i-a}e^{-\lambda t}t^{1-\beta} dt,\label{integration1}\\
&&\int_0^{x_i-a}e^{-\lambda t}t^{1-\beta} dt=\left(\frac{x_i-a}{2}\right)^{2-\beta}\int_{-1}^1e^{-\frac{\lambda(x_i-a)}{2}(1+\xi)}
(1+\xi)^{1-\beta}d\xi,\label{integration2}
\end{eqnarray}
and
\begin{eqnarray}
&&B_2(i)=\frac{e^{-\lambda(b-x_i)}}{\beta(b-x_i)^\beta}+\frac{\lambda}{\beta(1-\beta)}\frac{e^{-\lambda(b-x_i)}}{(b-x_i)^{\beta-1}}\nonumber\\
&&~~~~~+\lambda^{\beta}\Gamma(-\beta)+\frac{\lambda^2}{\beta(1-\beta)}\int_0^{b-x_i}e^{-\lambda t}t^{1-\beta} dt,\label{integration3}\\
&&\int_0^{b-x_i}e^{-\lambda t}t^{1-\beta} dt=\left(\frac{b-x_i}{2}\right)^{2-\beta}\int_{-1}^1e^{-\frac{\lambda(b-x_i)}{2}(1+\xi)}
(1+\xi)^{1-\beta}d\xi.\label{integration4}
\end{eqnarray}
The integrals  in (\ref{integration2}) and  (\ref{integration4}) can be calculated  by  the Jacobi-Gauss quadrature with
 the weight function  $(1-\xi)^0 (1+\xi)^{1-\beta}$) \cite[Appendix A, p. 447]{Hesthaven:08} and \cite{Shen:11}. Since $e^{-\frac{\lambda(x_i-a)}{2}(1+\xi)}$ and
  $e^{-\frac{\lambda(b-x_i)}{2}(1+\xi)}$ are sufficiently smooth in $[-1,1]$, these calculations yield the spectral accuracy.
  We assume that $B_1(i)$ and $B_2(i)$ are exact in the following  analysis.
    By (\ref{coeffieedddeddee1}), (\ref{coeffieedddeddee2}), (\ref{coeffieedddeddee3}), (\ref{coeffieedddeddee4}), (\ref{stiffenessmatrix1}),
    and (\ref{stiffenessmatrix222}),
it holds that
\begin{lemma} \label{lemma3233}
The entries of matrix ${\bf H}$ satisfies
\begin{eqnarray}\label{ceoffcientproperties}
h_{i,j}<0 \,\,(j\not=i),~~~ h_{i,i}>0, ~~~~ h_{i,i}+\sum_{j=1,j\not= i}^M h_{i,j}>B_1(i)+B_2(i).
\end{eqnarray}
\end{lemma}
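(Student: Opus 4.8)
The plan is to extract all three inequalities directly from the explicit formulas (\ref{stiffenessmatrix1}) and (\ref{stiffenessmatrix222}) for the entries of $\mathbf{H}$, using as the only inputs the strict positivity of the weights recorded in (\ref{coeffieedddeddee1})--(\ref{coeffieedddeddee4}) and the elementary observation that $s_1+1-\beta>0$ whenever $\beta\in(0,1)$ and $s_1\in\{0,1\}$. I would deliberately argue from the integral representations (\ref{coeffieedddeddee1})--(\ref{coeffieedddeddee4}) rather than from the closed forms (\ref{stiffenessmatrix2}), since in the latter the prefactor $C_{\beta,s}=h^{-\beta}/\big((\beta-s)(1-\beta+s)\big)$ changes sign as $s$ passes from $0$ to $1$ and the bracketed factors change sign with it, so reading signs there invites error.

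First I would treat the off-diagonal entries. For $1\le j\le i-2$ and for $i+2\le j\le M$, $h_{i,j}$ is, up to the positive factor $e^{-\lambda|i-j|h}/|i-j|^{s}$, the negative of $A_1(i,s,j+1)+A_2(i,s,j)$ (resp.\ $A_3(i,s,j+1)+A_4(i,s,j)$); each summand is $h^{-s-1}$ times an integral of a function that is strictly positive on the interior of a nondegenerate subinterval on which $x_i-y$ (resp.\ $y-x_i$) stays bounded away from zero, hence strictly positive, so $h_{i,j}<0$. For $j=i-1$ one has $h_{i,i-1}=-\frac{h^{-\beta}e^{-\lambda h}}{s_1+1-\beta}-A_2(i,s,i-1)e^{-\lambda h}$, a sum of two negative terms since $s_1+1-\beta>0$ and $A_2(i,s,i-1)>0$; the case $j=i+1$ is identical with $A_3(i,s,i+2)$. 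This yields $h_{i,j}<0$ for all $j\neq i$.

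Next I would handle the row-sum inequality, which is essentially immediate from the definition (\ref{stiffenessmatrix222}): the quantity $h_{i,i}+\sum_{j\neq i}h_{i,j}-B_1(i)-B_2(i)$ equals, in each of the cases $i=1$, $2\le i\le M-1$, $i=M$, a sum of terms of the shapes $\frac{h^{-\beta}e^{-\lambda h}}{s_1-\beta+1}$, $A_1(i,s,1)\frac{e^{-\lambda ih}}{i^{s}}$, and $A_4(i,s,M+1)\frac{e^{-\lambda(M+1-i)h}}{(M+1-i)^{s}}$, each manifestly positive (using $s_1-\beta+1>0$, and $A_1(i,s,1)>0$ for $i\ge2$, $A_4(i,s,M+1)>0$ for $i\le M-1$, again from the integral definitions, whose integrands do not meet the singularity $y=x_i$ for those index ranges). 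Hence $h_{i,i}+\sum_{j\neq i}h_{i,j}>B_1(i)+B_2(i)$. The diagonal positivity then follows by writing $h_{i,i}=\big(h_{i,i}+\sum_{j\neq i}h_{i,j}-B_1(i)-B_2(i)\big)+B_1(i)+B_2(i)-\sum_{j\neq i}h_{i,j}$: the first bracket is positive by the previous sentence, $B_1(i)=\int_{-\infty}^{a}e^{\lambda(y-x_i)}(x_i-y)^{-1-\beta}\,dy>0$ and $B_2(i)>0$ because their integrands are positive and the integrals converge (since $a<x_i<b$ for $1\le i\le M$ keeps the integrand bounded near the endpoints and the exponential supplies decay at infinity when $\lambda>0$, while for $\lambda=0$ one has the explicit values (\ref{integration0})), and $-\sum_{j\neq i}h_{i,j}>0$ by the off-diagonal step. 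Therefore $h_{i,i}>0$.

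I do not anticipate a genuine obstacle; this is careful sign bookkeeping. The two points that need attention are, first, resisting the temptation to read signs off the closed-form weights (\ref{stiffenessmatrix2}) and instead using the integral forms (\ref{coeffieedddeddee1})--(\ref{coeffieedddeddee4}) where positivity is transparent, and second, verifying that the weights occurring in the boundary and near-diagonal entries, namely $A_1(i,s,1)$, $A_4(i,s,M+1)$, $A_2(i,s,i-1)$, $A_3(i,s,i+2)$, are finite and strictly positive for precisely the indices $i$ at which they appear, i.e.\ that the corresponding integration intervals stay away from $y=x_i$.
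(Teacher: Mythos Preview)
Your proposal is correct and follows exactly the route the paper indicates: the paper does not give a detailed proof but simply states that the lemma follows from (\ref{coeffieedddeddee1})--(\ref{coeffieedddeddee4}), (\ref{stiffenessmatrix1}), and (\ref{stiffenessmatrix222}), and your argument is precisely the sign bookkeeping those references invite. Your extra caution---reading positivity of the $A_k$ from their integral definitions rather than from the closed forms (\ref{stiffenessmatrix2}), and checking that the boundary weights $A_1(i,s,1)$, $A_4(i,s,M+1)$ occur only for indices where the integration interval avoids $y=x_i$---is appropriate and fills in exactly what the paper leaves implicit.
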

According to the Gersgorin theorem \cite[Theorem 4.4]{Axelssion:96}, the minimum eigenvalue of $\bf {H}$ satisfies
\begin{eqnarray}\label{egenvalue1}
\lambda_{min}({\bf {H}})> \min_{1\le i\le M} \left(B_1(i)+B_2(i)\right)>0.
\end{eqnarray}
Thus  ${\bf H}$ is a strictly diagonally dominant  $M$-matrix \cite[Lemma 6.2]{Axelssion:96} and a  symmetric positive definite (s.p.d.) matrix. Therefore, the scheme
 (\ref{matrixform}) has an unique solution. Define the discrete $L_2$ inner product and norms:
\begin{eqnarray}
\left(v,w\right)=h\sum_{i=1}^{M} v_i w_i,~~~~\left\|v\right\|=\sqrt{(v,v)},~~~\left\|v\right\|_{\infty}=\max_{1\le i\le M}\left|v_i\right|.
\end{eqnarray}
\begin{theorem}\label{theoremsection11}
For the scheme (\ref{matrixform}), the following hold.
\begin{enumerate}
\item Let $\beta\in (0,1), s=s_1=0$, and $u(x)\in C^2(\bar{\Omega})$. Then
\begin{eqnarray}\label{LManorm1}
\left\|U-U_h\right\|\le C_1h^{2-\beta},~~~\left\|U-U_h\right\|_{\infty}\le C_2h^{2-\beta},
\end{eqnarray}
where $C_1$ and $C_2$ may depend on the values of $u^{(k)}(x), k=0,1,2$ on $\bar{\Omega}$, but independent of $h$.
\item Let $\beta\in (0,1),\, s=s_1=1$, and $u(x)\in C^3(\bar{\Omega})$. Then
\begin{eqnarray}\label{LManorm2}
\left\|U-U_h\right\|\le C_1h^{2},~~~\left\|U-U_h\right\|_{\infty}\le C_2h^{2},
\end{eqnarray}
where $C_1$ and $C_2$ may depend on the values of $u^{(k)}(x), k=0,1,2,3$ on $\bar{\Omega}$, but independent of $h$.
 \end{enumerate}
\end{theorem}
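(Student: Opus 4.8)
The plan is to run the standard stability-plus-consistency argument: consistency is already contained in the local truncation estimate (\ref{localerror1}), and stability in the structural facts about $\bf H$ established above, so the proof only has to combine them. Subtracting the scheme (\ref{matrixform}) from the exact identity (\ref{exactequation}) gives the error equation ${\bf H}(U-U_h)=R_h$, i.e.\ $U-U_h={\bf H}^{-1}R_h$. It therefore suffices to bound $\|{\bf H}^{-1}\|$ in the Euclidean and maximum norms by constants independent of $h$, and to control $R_h$ through (\ref{localerror1}). The two parts of the theorem differ only in which line of (\ref{localerror1}) is used, so I would treat them simultaneously.

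For the $L_2$ estimate I would use that $\bf H$ is symmetric positive definite, so $\|{\bf H}^{-1}\|_2=1/\lambda_{\min}({\bf H})$; by (\ref{egenvalue1}) it then suffices to bound $\min_{1\le i\le M}(B_1(i)+B_2(i))$ from below uniformly in $h$. Substituting $z=x_i-y$ in $B_1(i)$ and $z=y-x_i$ in $B_2(i)$ rewrites them as $\int_{x_i-a}^{\infty}e^{-\lambda z}z^{-1-\beta}\,dz$ and $\int_{b-x_i}^{\infty}e^{-\lambda z}z^{-1-\beta}\,dz$; since $0<x_i-a,\ b-x_i<b-a$ and the integrand is positive, each is at least $c_0:=\int_{b-a}^{\infty}e^{-\lambda z}z^{-1-\beta}\,dz>0$, a finite positive constant depending only on $\lambda$, $\beta$, $b-a$ (and equal to $(b-a)^{-\beta}/\beta$ when $\lambda=0$). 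Hence $\lambda_{\min}({\bf H})>2c_0$ for all $h$. Writing $\|v\|=\sqrt h\,\|v\|_{\ell_2}$ for the discrete norm, the factors $\sqrt h$ cancel in $\|U-U_h\|\le\|{\bf H}^{-1}\|_2\|R_h\|$, and $\|R_h\|\le\sqrt{b-a}\,\max_i|r_h^i|$, so (\ref{localerror1}) yields the first inequalities in (\ref{LManorm1}) and (\ref{LManorm2}).

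For the $L_\infty$ estimate I would exploit that $\bf H$ is an $M$-matrix, so ${\bf H}^{-1}\ge 0$ entrywise. Let $e=(1,\dots,1)^{\rm T}$. By Lemma \ref{lemma3233} the row sums obey $({\bf H}e)_i=h_{i,i}+\sum_{j\ne i}h_{i,j}>B_1(i)+B_2(i)\ge 2c_0$, i.e.\ ${\bf H}e\ge 2c_0\,e$; multiplying by ${\bf H}^{-1}\ge 0$ gives $e\ge 2c_0\,{\bf H}^{-1}e\ge 0$, hence ${\bf H}^{-1}e\le (2c_0)^{-1}e$ and $\|{\bf H}^{-1}\|_\infty=\|{\bf H}^{-1}e\|_\infty\le (2c_0)^{-1}$, independent of $h$. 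Then $\|U-U_h\|_\infty\le\|{\bf H}^{-1}\|_\infty\|R_h\|_\infty\le (2c_0)^{-1}\max_i|r_h^i|$, and (\ref{localerror1}) gives the maximum-norm estimates; $C_1,C_2$ inherit their dependence on $u^{(k)}$ on $\bar\Omega$ only through the bounds (\ref{localtruncation1})--(\ref{errorestimatematrix1}) behind (\ref{localerror1}), and on nothing else.

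The one point that is not completely routine is the $h$- and $i$-independent lower bound $B_1(i)+B_2(i)\ge 2c_0$ on the Gersgorin centres / row sums of $\bf H$, the heavy estimates having already been front-loaded into (\ref{localerror1}), Lemma \ref{lemma3233} and (\ref{egenvalue1}). A secondary thing to check is that the standing assumption that $B_1(i),B_2(i)$ (and $d_1(i),d_2(i)$) are evaluated exactly is harmless, i.e.\ that the spectral quadrature error for the integrals in (\ref{integration2}) and (\ref{integration4}) is negligible compared with $h^{2-\beta}$ (resp.\ $h^2$) and so does not enter $R_h$ at leading order.
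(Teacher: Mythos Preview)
Your argument is correct and follows essentially the same stability-plus-consistency route as the paper: the paper also derives the error equation ${\bf H}(U-U_h)=R_h$, bounds $\lambda_{\min}({\bf H})$ from below via the Gersgorin estimate (\ref{egenvalue1}) together with a uniform lower bound on $B_1(i)+B_2(i)$, and uses the $M$-matrix sign structure of ${\bf H}$ for the $L^\infty$ stability. The only cosmetic differences are that the paper obtains its lower bound on $B_1(i)+B_2(i)$ by truncating the integrals to $[a-\delta,a]\cup[b,b+\delta]$ (introducing an auxiliary $\delta>0$) rather than your cleaner monotonicity bound $c_0=\int_{b-a}^\infty e^{-\lambda z}z^{-1-\beta}\,dz$, and it phrases the $L^\infty$ stability via the direct ``pick the maximal index $i_0$'' argument rather than your equivalent ${\bf H}^{-1}\ge 0$, ${\bf H}e\ge 2c_0 e\Rightarrow {\bf H}^{-1}e\le (2c_0)^{-1}e$ formulation.
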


\begin{proof}
Firstly, taking an inner product of (\ref {matrixform}) with $U_h$, and using the Cauchy-Schwarz inequality, we have
\begin{eqnarray}
\lambda_{\min}({\bf H})\left\|U_h\right\|^2\le \left({\bf H}U_h,U_h\right)\le \left\|F\right\|\left\|U_h\right\|.
\end{eqnarray}

Since
\begin{eqnarray}
&&B_1(i)+B_2(i)=\int_{-\infty}^a\frac{e^{\lambda(y-x_i)}}{(x_i-y)^{1+\beta}}dy+\int_b^{\infty}\frac{e^{\lambda(x_i-y)}}{(y-x_i)^{1+\beta}}dy,~~~~~~~~~~~~~~~\nonumber\\
&&~~~~~~~~~~~~\ge \int_{a-\delta}^a\frac{e^{\lambda(y-x_i)}}{(x_i-y)^{1+\beta}}dy+\int_b^{b+\delta}\frac{e^{\lambda(x_i-y)}}{(y-x_i)^{1+\beta}}dy\nonumber\\
&&~~~~~~~~~~~~\ge e^{-\lambda(b-a+\delta)}\left(\int_{a-\delta}^a\frac{1}{(b-y)^{1+\beta}}dy+\int_b^{b+\delta}\frac{1}{(y-a)^{1+\beta}}dy\right)\nonumber\\
&&~~~~~~~~~~~~=\frac{2e^{-\lambda(b-a+\delta)}(b-a)^{-\beta}}{\beta},\label{sumdiangonal1}
\end{eqnarray}
where $\sigma$  is a given positive real number.
Thus, by (\ref{egenvalue1}), it holds that
\begin{eqnarray}\label{stabilty1}
\left\|U_h\right\|\le \frac{e^{\lambda(b-a+\delta)}(b-a)^{\beta}\beta}{2} \left\|F\right\|.
\end{eqnarray}

Secondly, assume $\left\|U_h\right\|_{\infty}=\left|U_{i_0}\right|$ with $1\le i_0\le M$. By Lemma \ref{lemma3233}, we have
\begin{eqnarray}
&&U_{i_0}\left(\sum_{j=1}^{i_0-1} h_{i_0,j} U_j+\left(h_{i_0,i_0}-\left( B_1(i_0)+B_2(i_0)  \right)\right)U_{i_0}+\sum_{j=i_0+1}^M h_{i_0,j}U_j\right)~~~~~~~~~~~~~\nonumber\\
&&~~~\ge \sum_{j=1}^{i_0-1} h_{i_0,j}U_{i_0}^2+\left(h_{i_0,i_0}-\left( B_1(i_0)+B_2(i_0)  \right)\right)U^2_{i_0}+\sum_{j=i_0+1}^M h_{i_0,j}U_{i_0}^2\nonumber\\
&&~~~\ge 0.
\end{eqnarray}
Then
\begin{eqnarray}
 &&\left(B_1(i_0)+B_2(i_0)\right)\left\|U_h\right\|_{\infty}\nonumber\\
 &&~=\left(B_1(i_0)+B_2(i_0)\right) \left|U_{i_0}\right|\nonumber\\
 &&~\le \Big|  \sum_{j=1}^M h_{i_0,j} U_j\Big|=\left|F_{i_0}\right|.
\end{eqnarray}
Therefore,
\begin{eqnarray}
&&\left\|U_h\right\|_{\infty}\le \frac{1}{B_1(i_0)+B_2(i_0)}\left|F_{i_0}\right|\nonumber\\
&&~~~~~~~~~\le  \frac{e^{\lambda(b-a+\delta)}(b-a)^{\beta}\beta}{2} \left\|F\right\|_{\infty}. \label{stabilty2}
\end{eqnarray}

Finally, by (\ref{exactequation}) and (\ref{matrixform}), it holds that
\begin{eqnarray}
{\bf H}\left(U-U_h\right)=R_h.
\end{eqnarray}
Then the desired results follow from (\ref{stabilty1}), (\ref{stabilty2}), and  (\ref{localerror1}).~~~~~~~~~~~~~~$\Box$
\end{proof}

\begin{remark}
If  $g(x)\not=0$,  then $d_1(i)$ and $d_2(i)$  usually  should also be  calculated by numerical integrations.
 Since $g(x)$ is known, with some of today's well-developed algorithms and software \cite{Abramowitz:65,Gradshteyn:80,Shen:11,software1}, they may be calculated directly or adaptively with the  accuracy not less than the order of the local truncation errors. Thus,  Theorem \ref{theoremsection11} still holds.
\end{remark}

\subsection{Algorithm implementation}\label{section3}
This section focuses on the effective algorithm implementation.
\subsubsection{Structure of the stiffness matrix}
 A symmetric matrix $T_{M}$ is called a symmetric Toeplitz matrix\index{Toeplitz matrix} if its entries are constant along each diagonal, i.e.,
\begin{equation} \label{teoplitzone}
 {\bf T}_{M}=\left [ \begin{matrix}
                      t_0  &  t_{1}   &   \cdots     &  t_{M-2}   & t_{M-1} \\
                       t_1  &  t_0   &    t_{1}     &  \ddots    &   t_{M-2}    \\
                      \vdots   &   t_1 &    \ddots     &  \ddots     & \vdots   \\
                    t_{M-2}   &     \ddots      &      \ddots         &    t_0      &  t_{1}        \\
                       t_{M-1}   &      t_{M-2}     &    \cdots          &  t_1  &  t_0
 \end{matrix}\right ].
 \end{equation}
  The Toeplitz matrix ${\bf T}_{M}$ is circulant if $t_{k}=t_{M-k}$ for all $1\le k\le {M-1}$.
 It should be noticed that a symmetric Toeplitz matrix is determined by its first column (or first row). Therefore, we can store ${\bf T}_M$
 with $M$ entries. Moreover, the product of matrix ${\bf T}_M $ with a vector $V\in \mathbb{R}^M$ can be performed by FFT  in $\mathcal{O}(M\log M)$
 arithmetic operations \cite[pp. 11-12]{Chan:07} and \cite{Jia:15,Lei:13,Lin:14}.

  From (\ref{stiffenessmatrix1}) and (\ref{stiffenessmatrix2}), it is easy to see that except the main diagonal,  the entries of
   matrix ${\bf H}$ are constant along each diagonal. Let ${\bf D}$ denotes the main diagonal matrix of ${\bf H}$. Then
 \begin{eqnarray}
 {\bf H}= {\bf D}+\left({\bf H}-{\bf D}\right),
 \end{eqnarray}
 and  ${\bf H}-{\bf D}$ is a symmetric Teoplitz matrix. Therefore, we can store ${\bf H}$ with $2M$ entries, and calculate ${\bf H} V$ by ${\bf D}V+\left({\bf H}-{\bf D}\right)V$   with  the cost $\mathcal{O}(M\log M)$.

 \subsubsection{ The fast conjugate gradient method}
  The matrix ${\bf H}$ is fully dense due to the nonlocal property of the tempered fractional Laplacian.
The $\mathcal{O}(M^3)$ operations are required to solve the linear system (\ref{matrixform}) by  a direct method. Since the product ${\bf H}V$ can be effectively computed in $\mathcal{O}(M\log M)$, the Krylov subspace iterative methods such as the conjugate gradient (CG) method naturally provide feasible and economical choices for solving such linear systems. These iterative methods
 only require a few matrix-vector products at each step, so they can be conveniently accomplished in $\mathcal{O}(M\log M)$ operations if the total number of
 iteration steps needed for achieving their convergence is not too large.

It is well known that the convergence speed  of the CG method is influenced
 by the condition number, or more precisely,  the eigenvalue
  distribution of ${\bf H}$; the more clustered around the unity the eigenvalues are,
  the faster the convergence rate will be \cite[pp. 8-10]{Chan:07}.
By the Gersgorin theorem,  (\ref{ceoffcientproperties}), and (\ref{sumdiangonal1}), it holds that
\begin{eqnarray}
\lambda_{min}({\bf {H}})> B_1(i)+B_2(i)\ge\frac{2e^{-\lambda(b-a+\delta)}(b-a)^{-\beta}}{\beta}
\end{eqnarray}
and
\begin{eqnarray}
 && \lambda_{\max}({\bf H})\le \max_{1\le i\le M} \Bigg(h_{i,i}-\sum_{j=1,j\not= i}^M h_{i,j}\Bigg)\le \max_{1\le i\le M}( 2h_{i,i}-B_1(i)-B_2(i))\nonumber\\
 &&~~~~~~~\le  \max_{1\le i\le M}\Bigg(2\mathcal{A}_1\bigg(s,\,\frac{e^{\lambda(y-x_i)}}{(x_i-y)^s}\bigg) +2\mathcal{A}_2\bigg(s,\,
 \frac{e^{\lambda(x_i-y)}}{(y-x_i)^s}\bigg)\nonumber\\
 &&~~~~~~~~+\frac{4h^{-\beta}e^{-\lambda h}}{s_1+1-\beta}+B_1(i)+B_2(i) \Bigg).
 \end{eqnarray}
 Noting that
 \begin{eqnarray}
 &&\mathcal{A}_1\left(s,\,\frac{e^{\lambda(y-x_i)}}{(x_i-y)^s}\right)\le \int_{x_1}^{x_{i-1}}(x_i-y)^{-1-\beta}dy+\int_{x_{i-2}}^{x_{i-1}}
 (x_i-x_{i-1})^{-s}(x_i-y)^{s-1-\beta}dy\nonumber\\
 &&~~~~~~~~~~~~~~~~~~~~~~~~~~~~~~\le h^{-\beta}\left(\frac{1-(i-1)^{-\beta}}{\beta}+ 1\right)\le C h^{-\beta},\nonumber\\
 &&\mathcal{A}_2\left(s,\,\frac{e^{\lambda(x_i-y)}}{(y-x_i)^s}\right)\le
 \int_{x_{i+1}}^{x_{i+2}}(x_{i+1}-x_{i})^{-s}(y-x_i)^{s-1-\beta}dy+\int_{x_{i+1}}^{x_M}(y-x_i)^{-1-\beta}dy\nonumber\\
  &&~~~~~~~~~~~~~~~~~~~~~~~~~~~~~~\le h^{-\beta}\left(\frac{1-(M-i)^{-\beta}}{\beta}+ 1\right)\le C h^{-\beta},\nonumber
\end{eqnarray}
and  $B_1(i)\le \frac{h^{-\beta}}{\beta},\, B_2(i)\le  \frac{h^{-\beta}}{\beta}$, we have
\begin{eqnarray}
\lambda_{\max}({\bf H})\le C h^{-\beta}.
\end{eqnarray}
Furthermore, defining
\[V_1:=\Big(\underbrace{0,0,\cdots,0}_{i-1}, 1,0,\underbrace{\cdots,0}_{M-i}\Big), ~~V_2:=\Big(\underbrace{1,1,1,\cdots,1, 1}_M\Big)\]
 and using the Courant-Fischer  theorem  \cite[Theorem 1.5]{Chan:07}
\begin{eqnarray}\label{Courantfishcher}
 \lambda_{\min}({\bf H})=\min_{V\not=0}\frac{\left({\bf H}V, V\right)}{\left(V,V\right)},~~~\lambda_{\max}({\bf H})=\max_{V\not=0}
 \frac{\left({\bf H}V V\right)}{\left(V,V\right)},
\end{eqnarray}
it follows that
\begin{eqnarray}
\lambda_{\max}({\bf H})\ge \frac{\left({\bf H}V_1, V_1\right)}{\left(V_1,V_1\right)}=h_{i,i}\ge \frac{2h^{-\beta}e^{-\lambda h}}{s+1-\beta}\ge Ch^{-\beta}
\end{eqnarray}
and
\begin{eqnarray}
&&\lambda_{\min}({\bf H})\le \frac{\left({\bf H}V_2, V_2\right)}{\left(V_2,V_2\right)}=\frac{1}{M}\sum_{i=1}^M \Big(h_{ii}+\sum_{j\not= i}^M h_{i,j}
 \Big)\nonumber\\
&&\le \frac{1}{M}\left(\sum_{i=2}^M\int_{x_0}^{x_1}\frac{(x_i-y)^{s-1-\beta}}{(ih)^{s}}dy
+\sum_{i=1}^{M-1}\int_{x_M}^{x_{M+1}}\frac{(y-x_i)^{s-1-\beta}}{\left((M+1-i)h\right)^{s}}dy\right)\nonumber\\
&&~+\frac{1}{M}\sum_{i=1}^M\left(\int_{-\infty}^a\frac{1}{(x_i-y)^{1+\beta}}dy
+\int_b^{\infty}\frac{1}{(y-x_i)^{1+\beta}}dy\right)+\frac{2h^{-\beta}}{M(s+1-\beta)}\nonumber\\
&&\le \frac{4h}{b-a}\left(\int_h^{Mh}y^{-1-\beta}dy+\frac{h^{-\beta}}{2^s}+\sum_{i=1}^M \frac{(ih)^{-\beta}}{\beta}+ \frac{h^{-\beta}}{(s+1-\beta)}\right).
\nonumber\\
&&\le \frac{4h}{b-a}\left( \frac{h^{-\beta}-(Mh)^{-\beta}}{\beta}+\frac{h^{-\beta}}{2^s}+\frac{1}{\beta h}\int_0^{Mh}y^{-\beta}dy+\frac{h^{-\beta}}
{(s+1-\beta)} \right)\nonumber\\
&&\le \frac{4}{b-a}\left(\frac{1}{\beta}+\frac{1}{2^s}+\frac{(b-a)^{1-\beta}}{\beta(1-\beta)}+\frac{1}{s+1-\beta}\right), \label{eigulew112}
\end{eqnarray}
where
\begin{eqnarray}
&&\sum_{i=2}^M\int_{x_0}^{x_1}\frac{(x_i-y)^{s-1-\beta}}{(ih)^{s}}dy\nonumber\\
 &&~\le (2h)^{-s}\int_{x_0}^{x_1}(x_2-y)^{s-1-\beta}dy+\sum_{i=2}^{M-1}\int_{x_0}^{x_1}(x_i-y)^{-1-\beta}dy\nonumber\\
 &&~\le \frac{h^{-\beta}}{2^s}+\int_{h}^{Mh}y^{-1-\beta}dy
 \end{eqnarray}
 and
 \begin{eqnarray}
 &&\sum_{i=1}^{M-1}\int_{x_M}^{x_{M+1}}\frac{(y-x_i)^{s-1-\beta}}{\left((M+1-i)h\right)^{s}}dy\nonumber\\
 &&~\le \sum_{i=2}^{M-1}\int_{x_M}^{x_{M+1}}(y-x_i)^{-1-\beta}dy+(2h)^{-s}\int_{x_M}^{x_{M+1}}(y-x_{M-1})^{s-1-\beta}dy\nonumber\\
 &&~\le \frac{h^{-\beta}}{2^s}+\int_{h}^{Mh}y^{-1-\beta}dy
 \end{eqnarray}
 have been used. Thus, $\lambda_{\min}\sim 1$ and $ \lambda_{\max}\sim h^{-\beta}$. As $h$  becomes small, the eigenvalues of ${\bf H}$
  distribute in a very large interval of length  $Ch^{-\beta}$.  Therefore, efficient preconditioning is required to speed up the convergence
   of CG iterations, that is, instead of solving the original
system ${\bf H}U_h=F$, we find a s.p.d. matrix ${\bf B}={\bf L} {\bf L}^{\rm T}$  and solve the preconditioned system
   \begin{eqnarray}\label{preconditionsystem}
   {\bf H}^* U^*=F^*,
   \end{eqnarray}
where ${\bf H}^* ={\bf L}^{-1}{\bf H}{\bf L}^{-{\rm T}},\, U^*={\bf L}^{\rm T} U_h$ and $ F^*={\bf L}^{-1}F$. We require that ${\bf B}$
`near' to ${\bf H}$ in some sense, such that the eigenvalue distributions of ${\bf H}^*$ is clustered  compared to ${\bf {H}}$.
In the following, we consider two types of preconditioners:

 Firstly, since for $j\le i-2$, we have
 \begin{eqnarray}
 &&\left|h_{i,j}\right|=\left(A_1(i,j+1)+A_2(i,j)\right){e^{-\lambda (i-j)h}}\,{(i-j)^{-s}}\nonumber\\
 &&~~~~~~~~=\frac{e^{-\lambda (i-j)h}}{h^{s+1}(i-j)^{s}}\Bigg(\int_{x_j}^{x_{j+1}}(x_{j+1}-y)(x_i-y)^{s-1-\beta}dy\nonumber\\
 &&~~~~~~~~~~~+\int_{x_{j-1}}^{x_j} (y-x_{j-1})(x_i-y)^{s-1-\beta}dy\Bigg)\nonumber\\
&&~~~~~~~~\le C(i-j)^{-s}(i-j-1)^{s-1-\beta} h^{-\beta}e^{-\lambda (i-j)h};
 \end{eqnarray}
if $|i-j|$ is sufficiently large, the entries $h_{i,j}$ are very small relative to the one near the main diagonal (with the order $h^{-\beta}$).
Hence, similar to \cite{Lin:14,Zhao:16}, we define a  symmetric  $(2k+1)$-bandwidth   matrix 
 \begin{equation} \label{teoplitzoneLU}
 {\bf G}:=\left [ \begin{matrix}
                      h_{1,1} &  \cdots    &   h_{1,k+1}      &            &  \\
                      \vdots  &    \ddots  &                &  \ddots    &       \\
                    h_{i,i-k} &            &    h_{i,i}     &            & h_{i,i+k}   \\
                              &   \ddots   &               &   \ddots    &  \vdots        \\
                              &  h_{M,M-k} &    \cdots     &             & h_{M,M}
 \end{matrix}\right ]+{\bf O}
 \end{equation}
 and expect ${\bf G}$  to be a reasonable approximation  of ${\bf H}$. Here ${\bf O}$ is a diagonal matrix satisfying
 ${\bf H} e= {\bf G} e,\, e=(1,1,\cdots,1)$, which is a  common  technique (i.e., the so-called  diagonal compensation)
 in designing preconditioners for $M$-matrices \cite[Sections 6 and 7]{Axelssion:96}.  By Lemma \ref{lemma3233} and (\ref{egenvalue1}),
  ${\bf G} $   is a s.p.d. $M$-matrix. Thus,  we can  perform its incomplete Cholesky (ichol)  factorization to generate a  banded matrix ${\bf L}$.
   We desire  that matrix  ${\bf B}={\bf L}{\bf L}^{\rm T} $  serves as an effective preconditioner of ${\bf H}$.

Secondly,  T. Chan's (optimal) circulant preconditioner has been widely used in solving the  Toeplitz systems \cite{Chan:07,Lei:13}. For the Toeplitz matrix ${\bf{T}}_M$ defined in (\ref{teoplitzone}),  the  entries in the first column of the T. Chan circulant preconditioner $C_F({\bf T}_M)$ are given by
\begin{eqnarray}
c_k=\frac{(M-k)t_k+k\,t_{M-k}}{M},& ~~0\le k\le M-1.
\end{eqnarray}  However, matrix ${\bf H}$ here
 may not be  a  Toeplitz matrix
(due to the entries on the  main diagonal), we can not construct the  T. Chan circulant preconditioner directly.
Recalling that   the generation process of $h_{i,i}$,   it holds that  $h_{i,i}=h_{M+1-i,M+1-i}$ for $i=1,2,\cdots, \lfloor\frac{M}{2}\rfloor$, and
\begin{eqnarray}
&&h_{i+1,i+1}-h_{i,i}=B_1(i+1)+B_2(i+1)-B_1(i)-B_2(i)\nonumber\\
&&~~~~~~~~+\int_{a}^{x_1}I_{\left[a,x_1\right]}\left({e^{-\lambda(x_{i+1}-y)}}{\left(x_{i+1}-y\right)^{-s}}\right)(x_{i+1}-y)^{s-1-\beta}dy\nonumber\\
&&~~~~~~~~-\int_{x_{M}}^bI_{\left[x_M,b\right]}\left({e^{-\lambda(y-x_{i})}}{\left(y-x_i\right)^{-s}}\right)(y-x_{i})^{s-1-\beta}dy\nonumber\\
&&~~~~~~~=\int_{x_i-a}^{x_{i+1}-a}\left(I_{\left[x_i-a,x_{i+1}-a\right]}\left({e^{-\lambda t}}{t^{-s}}\right)-{e^{-\lambda t}}{t^{-s}}\right) t^{s-1-\beta}dy\nonumber\\
&&~~~~~~~~-\int_{b-x_{i+1}}^{b-x_i}\left(I_{\left[b-x_{i+1},b-x_i\right]}\left({e^{-\lambda t}}{t^{-s}}\right)-{e^{-\lambda t}}{t^{-s}}\right)t^{s-1-\beta}dy
\end{eqnarray}
for $i=1,2,\cdots,\lceil \frac{M}{2}\rceil-1$, where $\lfloor\beta\rfloor:=\left\{ z\in \mathbb{N}: 0<z-\beta\le 1\right\}$,
 and the definition of $I_{[a_1,b_1]}$ is  given in (\ref{edde33sseee}).
We have the following observations:
when  $\lambda=s=0$, then $h_{i,i}=h_{i+1,i+1}$,
 which means that matrix ${\bf H}$  actually is a Toeplitz matrix;
 when  $s=0$ and $\lambda>0$,  by $\left|\left(e^{-\lambda t}\right)^{(2)}\right|=\left|\lambda^2 e^{-\lambda t}\right|\le \lambda^2$ ($t\ge 0$),
it follows that
\begin{eqnarray}
\left|h_{i+1,i+1}-h_{i,i}\right|\le  C h^{2-\beta}<< h_{i,i}\sim h^{-\beta}~~~{\rm for~} h\to 0.
\end{eqnarray}
 Though  it is not easy to prove that the changes of the entries of the main diagonal  of $ {\bf H}$ are slow  for  the cases $s=1$,
 the numerical results show they actually do.  These inspire us to construct a Toeplitz matrix ${\bf G}$ as
 \begin{eqnarray}
{\bf G}=\frac{\sum_ {i=1}^M {h_{i,i}}}{M}{\bf I}+\left({\bf H}-{\bf D}\right)~~~({\bf I} {~\rm is ~ identity~ matrix})
 \end{eqnarray}
 and expect  the corresponding  matrix  $C_F({\bf G})$ to be an effective  preconditioner of ${\bf H}$.

The algorithms of the CG  method and the preconditioned CG (PCG) method can be founded in \cite[pp. 470-473]{Axelssion:96}.
At each iteration,  the  required product of  ${\bf H} $  with a vector  $V\in \mathbb{R}^M$ can be performed  with the cost $\mathcal{O}(M\log M)$.
 Note that in the PCG algorithm,  the matrices $ {\bf L}$ and  ${\bf L}^{\rm T}$ do not appear explicitly, to perform the preconditioning,
we only need to calculate ${\bf B}^{-1} V$ or to solve the corresponding  equation  ${\bf B} X=V$   effectively.
For the ichol  factorization preconditioner,  the bandwidth characteristic of matrix ${\bf L}$  allows us to solve ${\bf B} X=V$    with the cost
 $\mathcal{O}(kM)$; for the T. Chan circulant preconditioner ${\bf B}=C_F({\bf G})$, ${\bf B}^{-1}V$ can be calculated by the FFT with the cost $\mathcal{O}(M\log M)$
 \cite[pp. 11-12]{Chan:07}. Thus the total cost for each iteration still is $\mathcal{O}(M\log M)$.

\section{Numerical scheme for the case $\beta\in [1,2)$} \label{sec3}
By choosing $s=0, s_1=1$ or $s=s_1=1$ in (\ref{bianliangdaihua1})-(\ref{bianliangdaihua2}), the numerical schemes introduced
 in  Subsection (\ref{subsec21}) can be easily  extended to the cases  $\beta\in [1,2)$.  If $\beta\in (1,2)$,
  the estimates (\ref{localtruncation1})-(\ref{localtruncation3}) still hold and we have
\begin{eqnarray}
 \left|r_h^i\right|=\left\{\begin{array}{l}
 \mathcal{O}(h^{2-\beta}) {~~~~\rm for~}  u\in C^3(\bar{\Omega}), \,s=0, s_1=1, \\
  \mathcal{O}(h^{3-\beta}){~~~~\rm  for~} u\in C^3(\bar{\Omega}), \,s=s_1=1.
  \end{array}\right.
 \end{eqnarray}
If $\beta=1$,  (\ref{localtruncation3}) still holds for $s_1=1$ and (\ref{localtruncation1}) and (\ref{localtruncation2}) are also true for $s=0$, however, for $s=1$, we have the following estimates
 \begin{eqnarray}
&&\left|\int_a^{x_{i-1}}g_1(i,s,y)(x_i-y)^{s-1-\beta}dy-\mathcal{A}_1(s,g_1)\right|\nonumber\\
&&~\le \frac{\big\|g^{(2)}_1(i,s,y)\big\|_{L^\infty[a,x_{i-1}]} h^2}{2}\left|\ln(h)-\ln(x_i-a)\right|
\end{eqnarray}
and
\begin{eqnarray}
&&\left|\int_{x_{i+1}}^b g_2(i,s,y)(y-x_i)^{s-1-\beta} dy-\mathcal{A}_2(s,g_2)\right|\nonumber\\
&&~~\le \frac{\big\|g^{(2)}_2(i,s,y)\big\|_{L^\infty[x_{i+1},b]} h^2}{2}\left|\ln(h)-\ln(b-x_i)\right|;
\end{eqnarray}
then we have that if $\beta=1$,
\begin{eqnarray}
 \left|r_h^i\right|=\left\{\begin{array}{lll}
 \mathcal{O}(h^{2-\beta}) & { for } &  u\in C^3(\bar{\Omega}), \,s=0,s_1=1, \\
  \mathcal{O}(h^2\ln(h)) & { for } & u\in C^3(\bar{\Omega}), \,s=s_1=1.
  \end{array}\right.
 \end{eqnarray}
 With the same  proof process as in  Theorem \ref{theoremsection11}, it holds that
\begin{theorem}\label{theoremsection21}
\begin{enumerate} For the scheme (\ref{matrixform}), we have the following estimates.
\item Let $\beta\in [1,2), s=0,s_1=1$, and $u(x)\in C^3(\bar{\Omega})$. Then
\begin{eqnarray}\label{LManormbeta21}
\left\|U-U_h\right\|\le C_1h^{2-\beta},~~~\left\|U-U_h\right\|_{\infty}\le C_2h^{2-\beta},
\end{eqnarray}
where $C_1$ and $C_2$ may depend on the values of $u^{(k)}(x), k=0,1,2,3$ on $\bar{\Omega}$, but independent of $h$.
\item Let $\beta\in [1,2), s=s_1=1$, and $u(x)\in C^3(\bar{\Omega})$. Then
\begin{eqnarray}\label{LManormbeta21}
\left\|U-U_h\right\|\le C_1h^{3-\beta},~~~\left\|U-U_h\right\|_{\infty}\le C_2h^{3-\beta}
\end{eqnarray}
for $\beta\in (1,2)$, and
\begin{eqnarray}\label{LManormbeta11}
\left\|U-U_h\right\|\le C_1|\ln(h)|h^2,~~~\left\|U-U_h\right\|_{\infty}\le C_2|\ln(h)|h^2
\end{eqnarray}
for $\beta=1$, where $C_1$ and $C_2$ may depend on the values of $u^{(k)}(x), k=0,1,2,3$ on  $\bar{\Omega}$, but independent of $h$.
 \end{enumerate}
\end{theorem}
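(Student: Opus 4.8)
The plan is to reproduce, nearly verbatim, the three-step argument used for Theorem~\ref{theoremsection11}: a priori stability of the scheme (\ref{matrixform}) in the discrete $L_2$ and $L_\infty$ norms, followed by insertion of the local truncation error into the error identity $\mathbf{H}(U-U_h)=R_h$. The only genuinely new point is to verify that the structural properties of $\mathbf{H}$ carry over to $\beta\in[1,2)$ with the admissible parameter choices $(s,s_1)=(0,1)$ and $(s,s_1)=(1,1)$; once that is done, the truncation bounds on $|r_h^i|$ recorded immediately before the theorem supply the rates in (\ref{LManormbeta21}) and (\ref{LManormbeta11}).

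First I would re-check the coefficient formulas (\ref{stiffenessmatrix1})--(\ref{stiffenessmatrix222}) for $\beta\in[1,2)$. Positivity of $A_1,\dots,A_4$ in (\ref{coeffieedddeddee1})--(\ref{coeffieedddeddee4}) is immediate for every $\beta<2$, the integrands being products of strictly positive factors, so each off-diagonal entry $h_{i,j}$ ($j\ne i$) is negative: for $|i-j|\ge 2$ this is clear from (\ref{stiffenessmatrix1}), while for $j=i\pm1$ one uses in addition that $s_1+1-\beta=2-\beta>0$. (This is exactly where $\beta<2$ together with the choice $s_1=1$ is needed; $s_1=0$ is inadmissible once $\beta\ge1$, since then $\int_0^h g_3(i,s_1,y)\,y^{s_1-1-\beta}\,dy$ diverges.) The diagonal entry defined through (\ref{stiffenessmatrix222}) is a sum of the strictly positive terms on the right side of (\ref{stiffenessmatrix222}), of $B_1(i)+B_2(i)>0$, and of $-\sum_{j\ne i}h_{i,j}>0$, hence $h_{i,i}>0$; and (\ref{stiffenessmatrix222}) itself gives $h_{i,i}+\sum_{j\ne i}h_{i,j}>B_1(i)+B_2(i)$. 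Symmetry $h_{i,j}=h_{j,i}$ follows, just as for $\beta\in(0,1)$, from the reflection identities $A_1(i,s,j+1)=A_4(j,s,i)$ and $A_2(i,s,j)=A_3(j,s,i+1)$, obtained by the change of variable $y\mapsto x_i+x_j-y$ and valid for every $\beta$. Thus the analogue of Lemma~\ref{lemma3233} holds, the Gersgorin bound (\ref{egenvalue1}) gives $\lambda_{\min}(\mathbf{H})>\min_i(B_1(i)+B_2(i))>0$, and $\mathbf{H}$ is again a strictly diagonally dominant symmetric positive definite $M$-matrix, so (\ref{matrixform}) is uniquely solvable.

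With this the stability part is a copy of the proof of Theorem~\ref{theoremsection11}: testing (\ref{matrixform}) against $U_h$ and using Cauchy--Schwarz gives $\lambda_{\min}(\mathbf{H})\|U_h\|^2\le\|F\|\,\|U_h\|$, and the lower bound (\ref{sumdiangonal1}) on $B_1(i)+B_2(i)$ turns this into $\|U_h\|\le C\|F\|$ exactly as in (\ref{stabilty1}); picking the index $i_0$ realizing $\|U_h\|_\infty$ and exploiting the $M$-matrix sign pattern of Lemma~\ref{lemma3233} yields $(B_1(i_0)+B_2(i_0))\|U_h\|_\infty\le|F_{i_0}|$, hence $\|U_h\|_\infty\le C\|F\|_\infty$ as in (\ref{stabilty2}). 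Applying these two estimates to $\mathbf{H}(U-U_h)=R_h$ (from (\ref{exactequation}) and (\ref{matrixform})) and inserting the bounds on $|r_h^i|$ stated just above the theorem --- $\mathcal{O}(h^{2-\beta})$ for $(s,s_1)=(0,1)$, $\mathcal{O}(h^{3-\beta})$ for $(s,s_1)=(1,1)$ when $\beta\in(1,2)$, and $\mathcal{O}(h^2|\ln h|)$ for $(s,s_1)=(1,1)$ when $\beta=1$ --- gives (\ref{LManormbeta21}) and (\ref{LManormbeta11}).

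The main obstacle is the endpoint $\beta=1$, where the closed forms (\ref{stiffenessmatrix2}) degenerate: the prefactor $C_{\beta,s}=h^{-\beta}/((\beta-s)(1-\beta+s))$ has a vanishing denominator when $\beta=1$ (through $\beta-s$ if $s=1$, through $1-\beta+s$ if $s=0$). The coefficients $A_1,\dots,A_4$ themselves remain finite --- they are the limiting, logarithm-producing values of the integrals (\ref{coeffieedddeddee1})--(\ref{coeffieedddeddee4}) --- and keep their signs, so the $M$-matrix structure is unaffected; what changes is that the interpolation error on the two outer intervals acquires the factor $|\ln h-\ln(x_i-a)|$ (respectively $|\ln h-\ln(b-x_i)|$) recorded above the theorem, which is the source of the logarithmic loss in (\ref{LManormbeta11}). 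The care needed is precisely in confirming that all the positivity and sign statements of Lemma~\ref{lemma3233} persist through these limits --- in particular that $h_{i,i}>0$ is not lost for $s=1$, where it cannot be read off a clean closed form and must instead be argued from (\ref{stiffenessmatrix222}) together with $-\sum_{j\ne i}h_{i,j}>0$; the rest of the proof is identical to that in Section~\ref{sec2}.
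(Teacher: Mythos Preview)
Your proposal is correct and follows the paper's own route exactly: the paper states the truncation bounds for $\beta\in[1,2)$ and then writes ``With the same proof process as in Theorem~\ref{theoremsection11}, it holds that\ldots'', so the whole argument is precisely the verification that Lemma~\ref{lemma3233} (and hence the two stability estimates (\ref{stabilty1}), (\ref{stabilty2})) persists for the admissible pairs $(s,s_1)=(0,1)$ and $(1,1)$, after which the error identity $\mathbf{H}(U-U_h)=R_h$ and the recorded $|r_h^i|$ bounds finish the job. Your careful re-derivation of the sign/positivity structure of $\mathbf{H}$ for $\beta\in[1,2)$ and the endpoint $\beta=1$ is exactly the content the paper leaves implicit.
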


As for generating the stiffness matrix ${\bf H}$, the  results in (\ref{stiffenessmatrix2}) and (\ref{integration0})-(\ref{integration4})
still hold for $\beta\in (1,2)$. While for
$\beta=1$ and $s=0$, one has
\begin{eqnarray}\label{stiffenessmatrix5}
\begin{array}{l}
A_1(i,0,j+1)+A_2(i,0,j)=A_3(i,0,j+1)+A_4(i,0,j)\\[3pt]
~=\frac{1}{h}\left(2\ln(|i-j|-\ln\left(|i-j|+1\right)-\ln\left(|i-j|-1\right)\right),\\[5pt]
A_2(i,0,i-1)=A_3(i,i+2)=\frac{1}{h}\left(1-\ln(2)\right),\\[5pt]
A_1(i,0,1)=\frac{1}{h}\left(\ln\left(\frac{i}{i-1}\right)-\frac{1}{i}\right),\\[5pt]
A_4(i,0,M+1)=\frac{1}{h}\left(\ln\left(\frac{M+1-i}{M-i}\right)-\frac{1}{M+1-i}\right);
\end{array}
\end{eqnarray}
 and for $\beta=1$ and $s=1$, one has
\begin{eqnarray}\label{stiffenessmatrix6}
\begin{array}{l}
A_1(i,1,j+1)+A_2(i,1,j)=A_3(i,1,j+1)+A_4(i,1,j)\\[3pt]
~=\frac{1}{h}\Big(-2|i-j|\ln(|i-j|)+\left(|i-j|+1\right)\ln\left(|i-j|+1\right)\\
~~~~+\left(|i-j|-1\right)\ln\left(|i-j|-1\right)\Big),\\[5pt]
A_2(i,1,i-1)=A_3(i,i+2)=\frac{1}{h}\left(2\ln(2)-1\right),\\[5pt]
A_1(i,1,1)=\frac{1}{h}\left((1-i)\ln\left(\frac{i}{i-1}\right)+1\right),\\[5pt]
A_4(i,1,M+1)=\frac{1}{h}\left((i-M)\ln\left(\frac{M+1-i}{M-i}\right)+1\right).
\end{array}
\end{eqnarray}
The calculations for $B_1(i)$ with $\beta=1$  are given below:
 when $\lambda=0$, the results in (\ref{integration0}) still hold;
 when $\lambda>0$ and $b-x_i\ge \frac{1}{2\lambda}$, we first rewrite $B_1(i)$ as
  \begin{eqnarray}
   \int_{b-x_i}^{\infty} e^{-\lambda t} t^{-2}dt \approx\int_{\lambda/K}^{\frac{1}{b-x_i}}e^{-\frac{\lambda}{t}}dt
   =\left(\frac{1}{2(b-x_i)}-\frac{\lambda}{2K}\right)\int_{-1}^1
   e^{-\lambda/\eta(\xi, x_i)}d\xi \label{integralbeta11}
  \end{eqnarray}
with $\eta(\xi, x_i)=:\frac{\xi+1}{2(b-x_i)}-\frac{\lambda(\xi-1)}{2K}$ and then calculate $\int_{-1}^1
e^{-\lambda/\eta(\xi, x_i)}d\xi$ by the Jacobi-Gauss quadrature with the weight function $(1-\xi)^0(1+\xi)^0$ \cite{Hesthaven:08,Shen:11}
(In our calculation, $K$ is chosen as $80$);
 when $\lambda>0$ and $b-x_i<\frac{1}{2\lambda}$, we  first rewrite $B_1(i)$ as
   \begin{eqnarray}
   B_1(i)=\frac{e^{-\lambda(b-x_i)}}{b-x_i}-\lambda \int_{\lambda(b-x_i)}^{\infty} \frac{e^{-t}}{t} dt,
   \end{eqnarray}
and then use the  series expansion representation in \cite[Eq. 5.1.11]{Abramowitz:65}, i.e.,
    \begin{eqnarray}
    \int_{\lambda(b-x_i)}^{\infty} \frac{e^{-t}}{t} dt=-\gamma-\ln(z)-\sum_{n=1}^{\infty}\frac{(-1)^n z^n}{n\Gamma(n+1)},~~~z=\lambda(b-x_i),
    \label{integralbeta12}
    \end{eqnarray}
 where $\gamma$ is the Euler constant (in our calculations,  the series is truncated with the first $26$ items).

Since matrix $\bf{H}$ has the same structure as the case $\beta\in (0,1)$, the implementation techniques developed in  Section \ref{section3}
can also be used here to solve the corresponding algebraic equation, and the numerical results show  they still work well.

\section{Numerical results}\label{sec4}

In this section, we make some numerical experiments to show the performance of numerical schemes above. All are run in
MATLAB 7.11  on a PC with Intel(R) Core (TM)i7-4510U 2.6 GHz processor and 8.0 GB RAM. For the CG and PCG iterations,  we adopt the initial guess $U_0=0$
 and the stopping criterion
 \[ \frac{\|r(k)\|_{l_2}}{\|r(0)\|_{l_2}}\le 1e-9,\]
 where $r(k)$ denotes the residual vector  after $k$ iterations. Let $h_1=(b-a)/(M_1+1)$ and $h_2=(b-a)/(M_2+1)$. The convergence rates at $M=M_1$ are computed by
 \begin{eqnarray}
 {\rm rate}=\left\{\begin{array}{ll}
\frac{
\ln\left[\left(\ln(h_2)\left({\rm the ~error~ at~h_1}\right)\right)/\left(\ln(h_1)\left({\rm the~ error~ at~ h_2}\right)\right)\right]}
{\ln\left(h_1/h_2\right)},&s=s_1=1, \beta=1,\\[5pt]
 \frac{
~~~\ln\left({\rm the ~error~ at~h_1}/{\rm the~ error~ at~ h_2}\right)}{\ln\left(h_1/h_2\right)},& {\rm~ otherwise.}
 \end{array}\right.
 \end{eqnarray}

\begin{example}\label{example1}
Consider  model (\ref{Laplacedirichelt}) with $g(x)=0$, and the force term  $f(x)$ being derived from the exact solution
$u(x)=x^2(1-x)$ for $x\in \Omega$.
\end{example}
Note that  $u\in C^3(\bar{\Omega})$. If $\lambda=0$,  the explicit form of $f(x)$  is given  in \cite[Example 1]{Zhang:17}.
If $\lambda\not=0$,  the value of $f(x)$  at $x_i$ should  be calculated numerically.
More specifically, for $\beta\not=1$, we have
\begin{eqnarray}
&&f(x_i)=u(x_i)\left(B_1(i)+B_2(i)\right)\nonumber\\
&&~~~~~~~+\frac{2x_i-3x_i^2}{1-\beta}\left(x_i^{1-\beta}e^{-\lambda x_i}-(1-x_i)^{1-\beta}e^{-\lambda(1-x_i)}\right)\nonumber\\
&&~~~~~~~+\int_0^{x_i}\left(-t+3x_i-1+\frac{\lambda(2x_i-3x_i^2)}{1-\beta}\right)e^{-\lambda t}t^{1-\beta}dt\nonumber\\
&&~~~~~~~+\int_0^{1-x_i}\left(t+3x_i-1-\frac{\lambda(2x_i-3x_i^2)}{1-\beta}\right)e^{-\lambda t} t^{1-\beta}dt,\label{example1integral1}
\end{eqnarray}
and the integrals in (\ref{example1integral1}) can be handled as in (\ref{integration2}) and  (\ref{integration4}); for $\beta=1$, we have
\begin{eqnarray}
&&f(x_i)=u(x_i)\left(B_1(i)+B_2(i)\right)\nonumber\\
&&~~~~~~~~+\int_0^{x_i}(-t+3x_i-1)e^{-\lambda t}dt+\int_0^{1-x_i}(t+3x_i-1)e^{-\lambda t} dt\nonumber\\
&&~~~~~~~~+\left(2x_i-3x_i^2\right)\left(\int_{1-x_i}^{\infty}e^{-\lambda  t}t^{-1}dt-\int_{x_i}^{\infty}e^{-\lambda t} t^{-1} dt\right),\label{example1integral2}
\end{eqnarray}
and the integrals in the second line of  (\ref{example1integral2}) can be handled as in (\ref{integralbeta11}) and (\ref{integralbeta12}).

The  errors and the corresponding convergence rates with different
$s,s_1,\beta, \lambda$,  are listed in Tables \ref{table:1-1}, which confirm the theoretical analysis
in Theorems \ref{theoremsection11} and \ref{theoremsection21}.
The CUP time and the iterative times of the CG and PCG method  are presented in Tables \ref{table:1-2} and \ref{table:1-3},
where ``PCG(Ichol)'' denotes the perconditioners ${\bf B}$  coming from the ichol factorization
of the $(2k+1)$-bandwidth matrix ${\bf G}$ with $k=10$,  and  ``PCG(T)'' denotes that the preconditioner is the T. Chan circulant matrix.
The results show that the CPU time spent with PCG methods  are much less than  those with  the
   Gauss elimination method (the data under ``Gauss'' in Tables \ref{table:1-2} and \ref{table:1-3} ) and the CG method, and  the  T. Chan circulant preconditioner with the iterative times almost independent of $h$ is a little more effective than  the ichol  preconditioner.
   In Figure \ref{figure:1-1}, we display the eigenvalue distribution of the matrix systems with and without preconditioning; after preconditioning, the eigenvalues become clustered around the unity.
\begin{table}[!h t b p]\fontsize{7.0pt}{10.5pt}\selectfont
\begin{center}
 \caption{Errors and convergence rates in  Example \ref{example1} with $M=2^J-1$.}
\begin{tabular}{cc|cc|cc|cc|cc}
  \hline
  $(\beta,\,s,\,s_1)$& $J$   &\multicolumn{4}{c|}{$\lambda=0.5$ }    &\multicolumn{4}{|c}{$\lambda=3$  }\\
     &           & $L^2$-Err      & Rate      &$L^{\infty}$-Err  & rate     &$L^2$-Err  & Rate     &$L^\infty$-Err  & Rate    \\
   \hline

                  &$12$     &2.4068e-06   &  --      &  3.7160e-06    & --    &7.0941e-06     &--       & 1.0474e-05     &--   \\
    $(0.5,\,0,\,0)$  &$13$    &8.5343e-07   & 1.50     & 1.3177e-06    & 1.50       &2.5222e-06  & 1.49     & 3.7238e-06    &1.49  \\
                   &$14$     &3.0234e-07  &  1.50   &  4.6680e-07   &  1.50  &8.9518e-07    &1.49      &1.3216e-06    &1.49  \\

 \\[-5pt]
        &$12$     &  2.6157e-09                &  --         & 4.1651e-09      & --    &2.7141e-08     &--       & 3.9151e-08    &--  \\
    $(0.5,\,1,\,1)$   &$13$     &6.5490e-10   & 2.00       & 1.0428e-09        & 2.00 &6.8022e-09     &  2.00     & 9.8127e-09      &2.00\\
                   &$14$     & 1.6391e-10   &  2.00      &2.6096e-10     &  2.00   &1.7036e-09    & 2.00     & 2.4577e-09  &2.00\\
 \hline

                  &$12$     &1.3524e-05    &  --         &  1.9526e-05   & --    &6.3501e-05    &--       & 8.6565e-05      &-- \\
    $(1.0,\,1,\,0)$   &$13$     &6.7700e-06   & 1.00    & 9.7756e-06      & 1.00     &3.1824e-05      &  1.00     &  4.3391e-05    &1.00\\
                   &$14$     &3.3872e-06 &  1.00    & 4.8911e-06     &  1.00    &1.5932e-05    & 1.00     &2.1725e-05 &1.00 \\

 \\[-5pt]
                     &$12$     &1.0989e-08     &--       &1.6835e-08      & --       &1.2222e-07   &--       & 1.8135e-07     &-- \\
    $(1.0,\,1,1)$   &$13$     & 2.9312e-09    &  2.02   &4.4886e-09      & 2.02       &3.2845e-08     &2.01   &4.8751e-08    &2.01\\
                   &$14$     &7.7258e-10   &  2.03   &1.1850e-09    &  2.02     &8.7695e-09  & 2.01     &1.3024e-08  &2.01
\\
\hline

                  &$12$     &1.2882e-04    &  --        &1.9477e-04       & --     &1.8667e-04    &--       &2.8056e-04     &-- \\
    $(1.5,\,1,\,0)$   &$13$     &9.1256e-05   & 0.50     &1.3797e-04       &0.50    &1.3314e-04     &0.50     & 2.0010e-04     &0.49\\
                   &$14$     &6.4593e-05   & 0.50      &9.7660e-05      &0.50    &9.4560e-05    & 0.50      &1.4212e-04  &0.50 \\

 \\[-5pt]
                    &$12$     & 2.4683e-07   &  --     &  3.7330e-07  & --       &2.1634e-06      &--        & 3.2512e-06     &--   \\
    $(1.5,\,1,\,1)$   &$13$     &8.7460e-08    &1.50     &  1.3226e-07    & 1.50    &7.6763e-07     &1.49      & 1.1536e-06     &1.49\\
                   &$14$     &3.0694e-08    & 1.50      &4.6482e-08   &  1.50     &2.7217e-07     & 1.50     &4.0906e-07     &1.50\\
                       \hline
\end{tabular} \label{table:1-1}
\end{center}
\end{table}

 \begin{table}[!h t b p]\fontsize{7.0pt}{10pt}\selectfont
\begin{center}
 \caption{Performance of the CG and PCG methods in  Example \ref{example1} with $\lambda=0.5$ and $ M=2^J-1$.}
\begin{tabular}{cc|cc|cc|cc|c}
  \hline

$(\beta,\,s,\,s_1)$ & $J$   & \multicolumn{2}{c}{CG }    & \multicolumn{2}{c}{PCG\,(Ichol) }&\multicolumn{2}{c}{PCG\,(T)}  &Gauss  \\
      &        &      $\#$ iter    & CPU(s)    &$\#$ iter       &  CPU(s)      & $\#$ iter     & CPU(s) & CPU(s) \\

                     \hline
              &$12$           & 97    &0.8174           & 40     &0.1472          &  11       &  0.0182   & 1.1997 \\
  $(0.5,\,0,\,0)$   &$13$         & 115  &   0.3248         &44     & 0.1789        &  11       & 0.0627   &  6.0118   \\
              &$14$           & 138    &2.7400          &49      &2.1894          &  11       & 0.1691  & 56.6159\\
          \\[-5pt]

              &$12$            & 74        &  0.1622          & 39    &0.0851       0    & 10         & 0.0159 &0.7742 \\
  $(0.5,\,1,\,1)$  &$13$            &88      &  0.3222       &   43    &0.1619         &  11         &  0.0609&  7.5780   \\
              &$14$           &105       &   3.1492      &  49     & 1.1515          & 11         &  0.1355  &  57.7863 \\

    \hline

               &$12$       &  329    &1.3938           & 47       &0.1632            &  15        &  0.0215  &  0.8483 \\
  $(1.0,\,1,\,0)$   &$13$      &  468   & 1.3587            & 58      &0.2424            &  16        & 0.0888 &6.0763  \\
              &$14$         & 664    &7.3204           &71       &3.3773             &  17       &  0.2877 & 53.1450 \\

     \\[-5pt]
              &$12$       &  337      &   0.6903        &  47   &0.7371         &  15         &0.0715&   0.8431\\
  $(1.0,\,1,1)$    &$13$     &   479       &  1.6629    &  58     &0.2146        & 16       &0.0872& 6.1244\\
              &$14$       & 680      &   7.5843       &  71     & 3.368       & 17       & 0.2486&  53.5024   \\
               \hline

              &$12$    &  1363       &   1.8888       & 30     &0.2089      & 29         &  0.0356  &   0.8860\\
  $(1.5,\,1,\,0)$    &$13$   & 2300        & 8.8453       & 35      & 0.1220          &34         &    0.1779&6.7129   \\
              &$14$    & 3880      & 25.9750       &  42     & 0.4814        &  41        &   0.4571& 55.0814  \\

      \\[-5pt]
              &$12$   & 1383      & 1.5639       &  30      &0.0609        &  29       &   0.0448& 0.9285  \\
  $(1.5,\,1,\,1)$  &$13$  & 2333       &  7.2669      & 35      &0.1538         & 33         &0.1960-& 7.5258   \\
              &$14$   &  3935       &  30.7326       & 42    & 1.0987      & 39       &  0.4440  & 57.1891 \\
            \hline
\end{tabular}\label{table:1-2}
\end{center}
\end{table}

 \begin{table}[!h t b p]\fontsize{7.0pt}{10pt}\selectfont
\begin{center}
 \caption{Performance of the CG and PCG methods in  Example \ref{example1} with $\lambda=3$ and $M=2^J-1$.}
\begin{tabular}{cc|cc|cc|cc|c}
  \hline

$(\beta,\,s,\,s_1)$ & $J$   & \multicolumn{2}{c}{CG } & \multicolumn{2}{c}{PCG\,(Ichol) }  &\multicolumn{2}{c}{PCG\,(T)}  &Gauss~~ \\
  &        &      $\#$ iter    & CPU(s)    &$\#$ iter       &  CPU(s)     & $\#$ iter     & CPU(s) & CPU(s) \\
                     \hline
              &$12$          &  127   & 0.2231             & 70      &0.1079        &  12        &  0.0207 &0.8877 \\
   $(0.5,\,0,\,0)$   &$13$         & 152 & 0.5855             & 88      &0.3419         &  12       &  0.0544&6.8314  \\
              &$14$           & 182   & 5.2566              &108     &0.9474          &  12       & 0.1048 &49.5857  \\
          \\[-5pt]

              &$12$            &97        &  0.5242          & 70    &0.0897       &  11       & 0.0199  & 0.9509 \\
  $(0.5,\,1,\,1)$  &$13$            &116     & 0.3499        &   88      &0.2889        &  12         & 0.0525 &6.8463   \\
              &$14$           &139        &  1.7316         &   107      &0.8856         & 12        & 0.0989 &50.5869   \\

    \hline

               &$12$        &   376      &  0.3920       & 57     &0.0727         & 17         &   0.0269 & 0.8913 \\
  $(1.0,\,1,\,0)$   &$13$       &   534        &1.7202      &  74      & 0.3233          &  17        &   0.1141& 7.2649  \\
              &$14$          &   758      & 7.2397       &   95     & 3.3281         &  19         &    0.2291 & 57.5678   \\

     \\[-5pt]
              &$12$     & 385    &   1.1568        &  57      & 0.1364       &  17       & 0.0610 &  0.8701\\
  $(1.0,\,1,1)$    &$13$    & 547       & 1.8208     & 75     &0.3743      & 17       &  0.0991 &7.1813 \\
              &$14$     & 776     &  12.3254        &  95      &5.1381        & 19        & 0.2976 &  53.7438  \\
               \hline

              &$12$    &  1423        & 1.9613           &  29     & 1.8896        &  30       &   0.0516 & 0.8078 \\
  $(1.5,\,1,\,0)$    &$13$   &  2400        & 6.6799      &  35      &0.2333         & 37        &  0.2130  &6.2326 \\
              &$14$    &   4047        & 25.8704       &  42     &0.4828      &  42        &  0.5635&55.6647\\
      \\[-5pt]
              &$12$   & 1444           & 1.3327       &   29    &1.1708         & 30         &   0.0382 & 0.8009  \\
  $(1.5,\,1,\,1)$   &$13$  &  2435       &  9.4863        &  36     &0.1528          &  37         &   0.1880 & 6.8698  \\
              &$14$   &   4104        & 26.0405      &  42    &0.4805      &  42         &  0.5481&51.1018 \\
            \hline
\end{tabular}\label{table:1-3}
\end{center}
\end{table}

 \begin{figure}[!h t p]
\begin{center}
\includegraphics[width=1.45in,height=1.2in,angle=0]{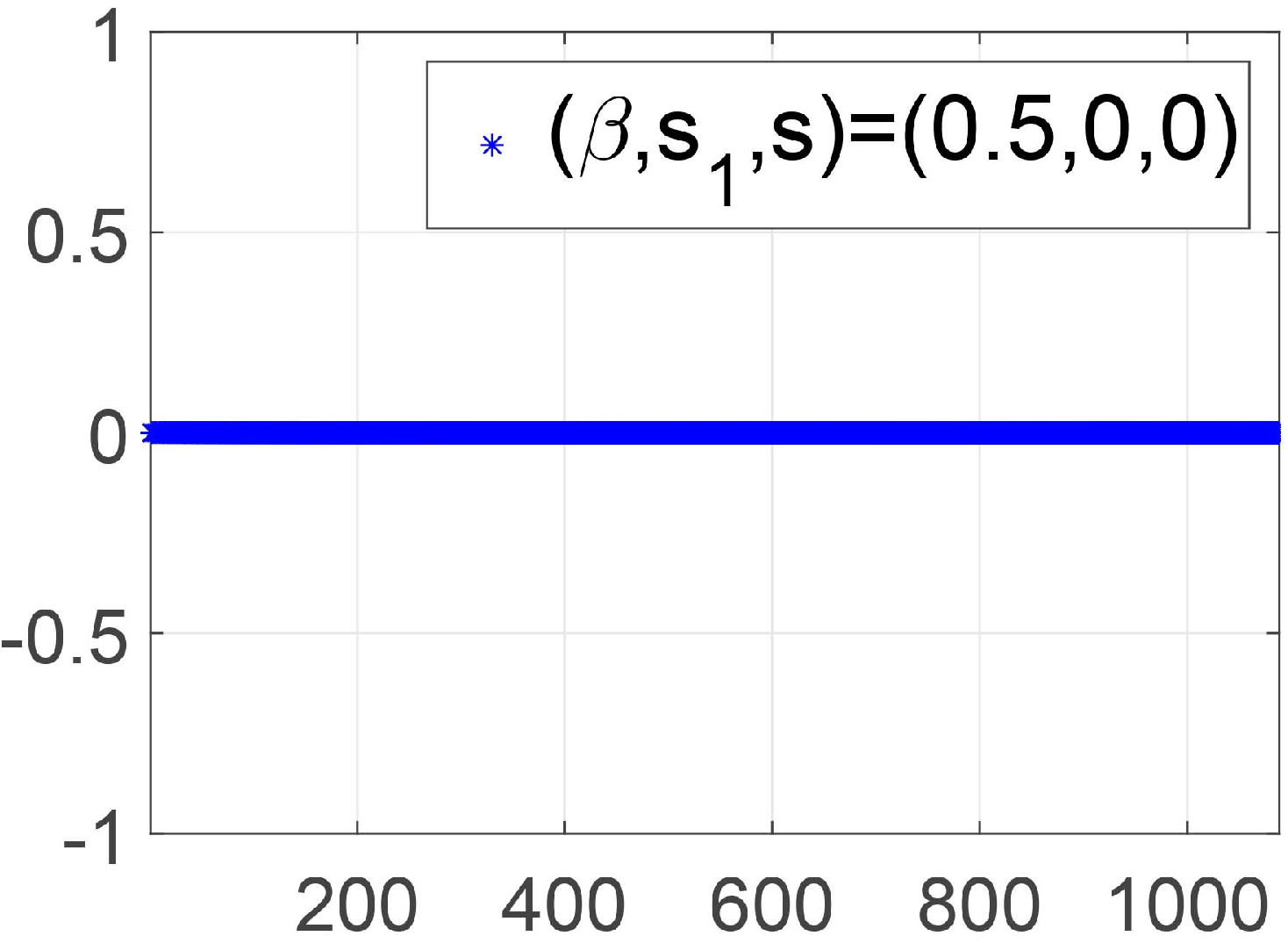}
\includegraphics[width=1.45in,height=1.2in,angle=0]{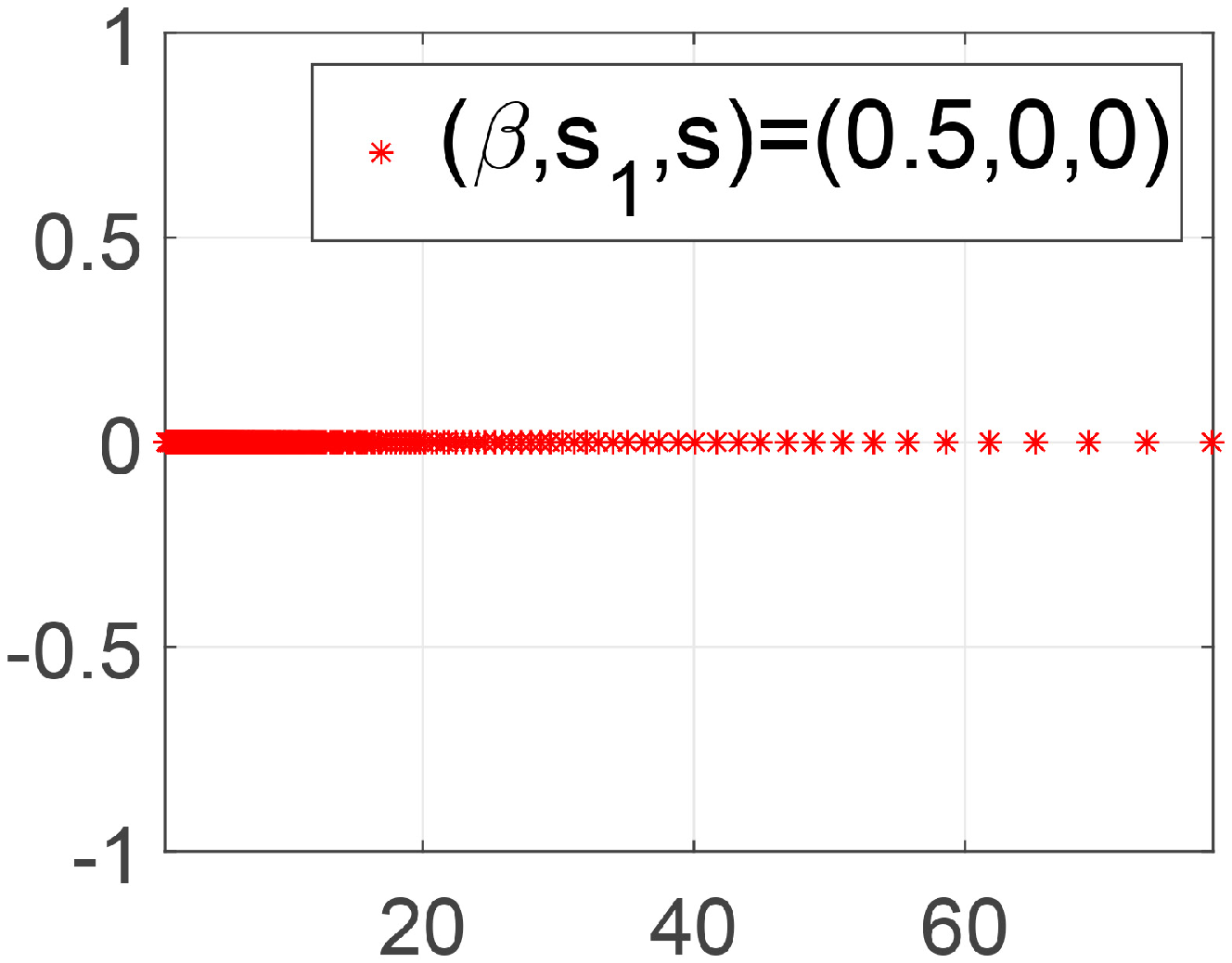}
\includegraphics[width=1.45in,height=1.2in,angle=0]{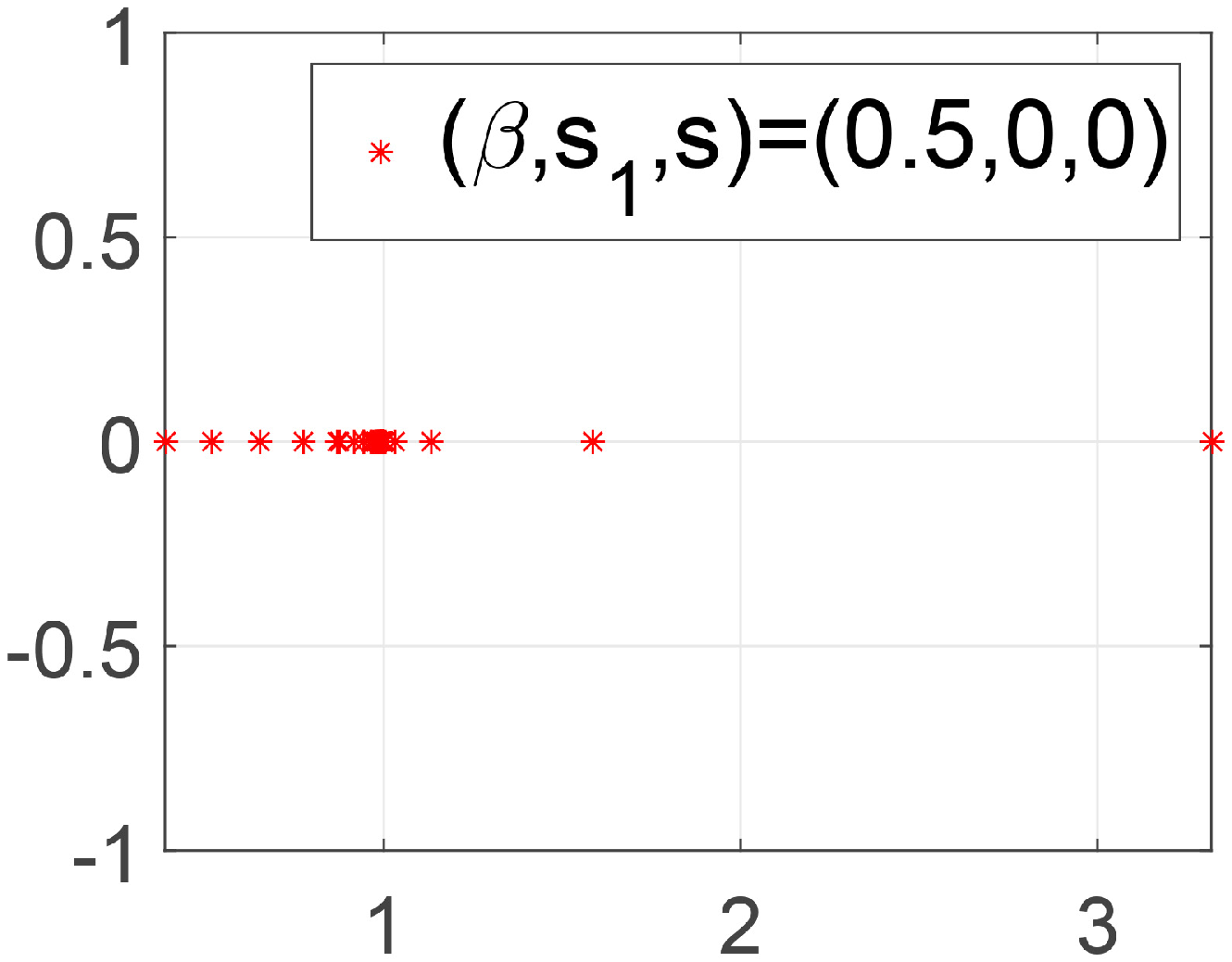}\\
\includegraphics[width=1.45in,height=1.2in,angle=0]{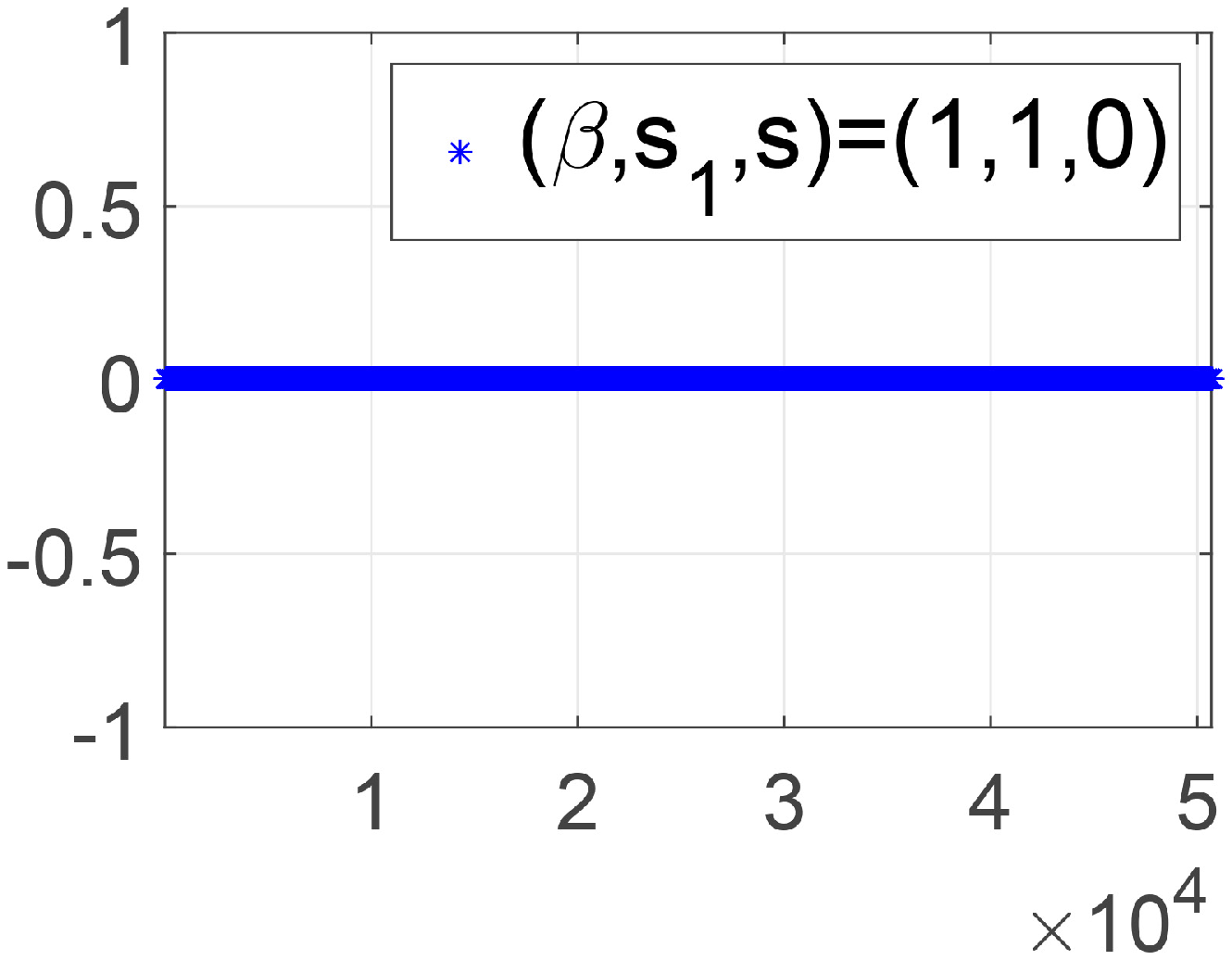}
\includegraphics[width=1.45in,height=1.2in,angle=0]{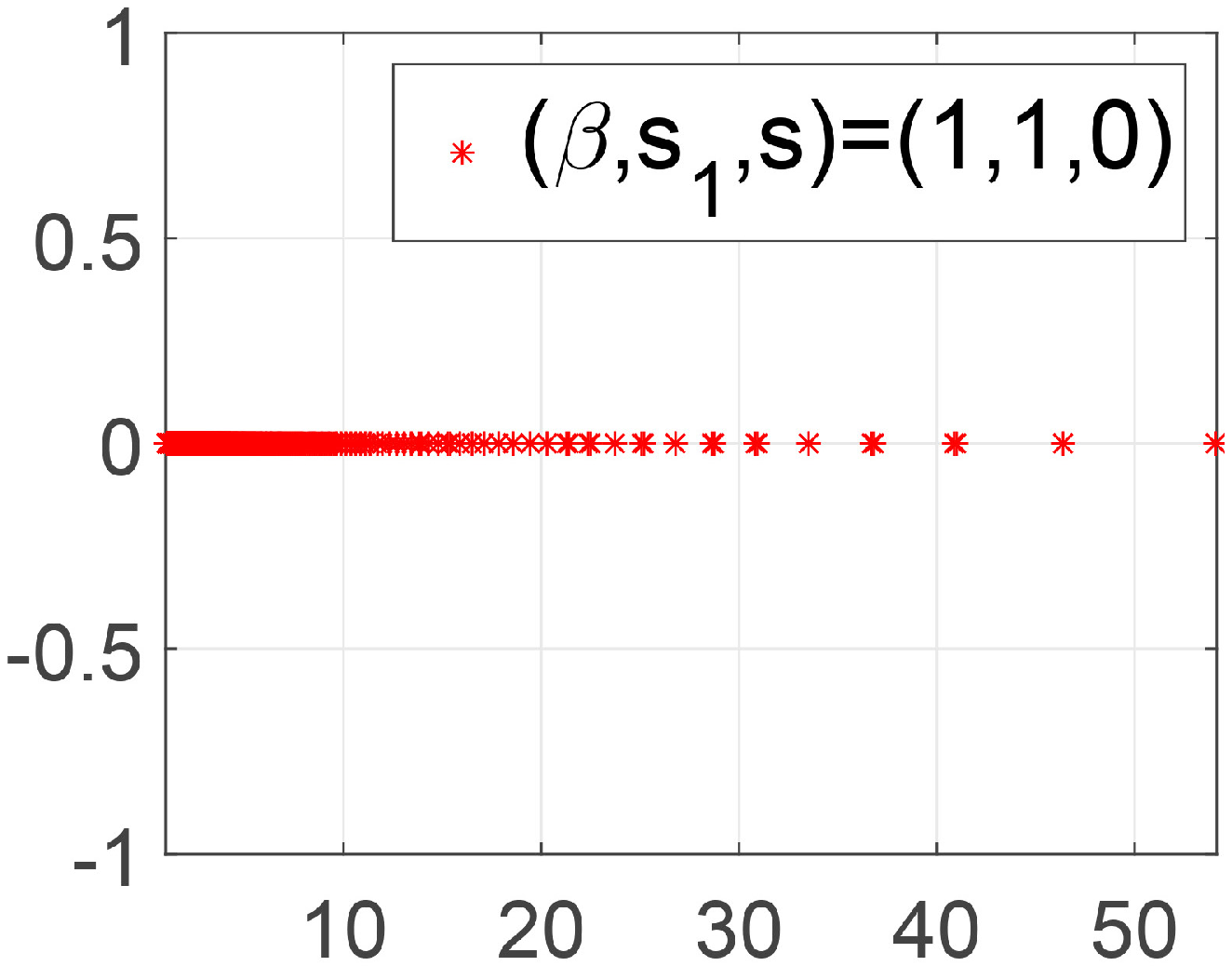}
\includegraphics[width=1.45in,height=1.2in,angle=0]{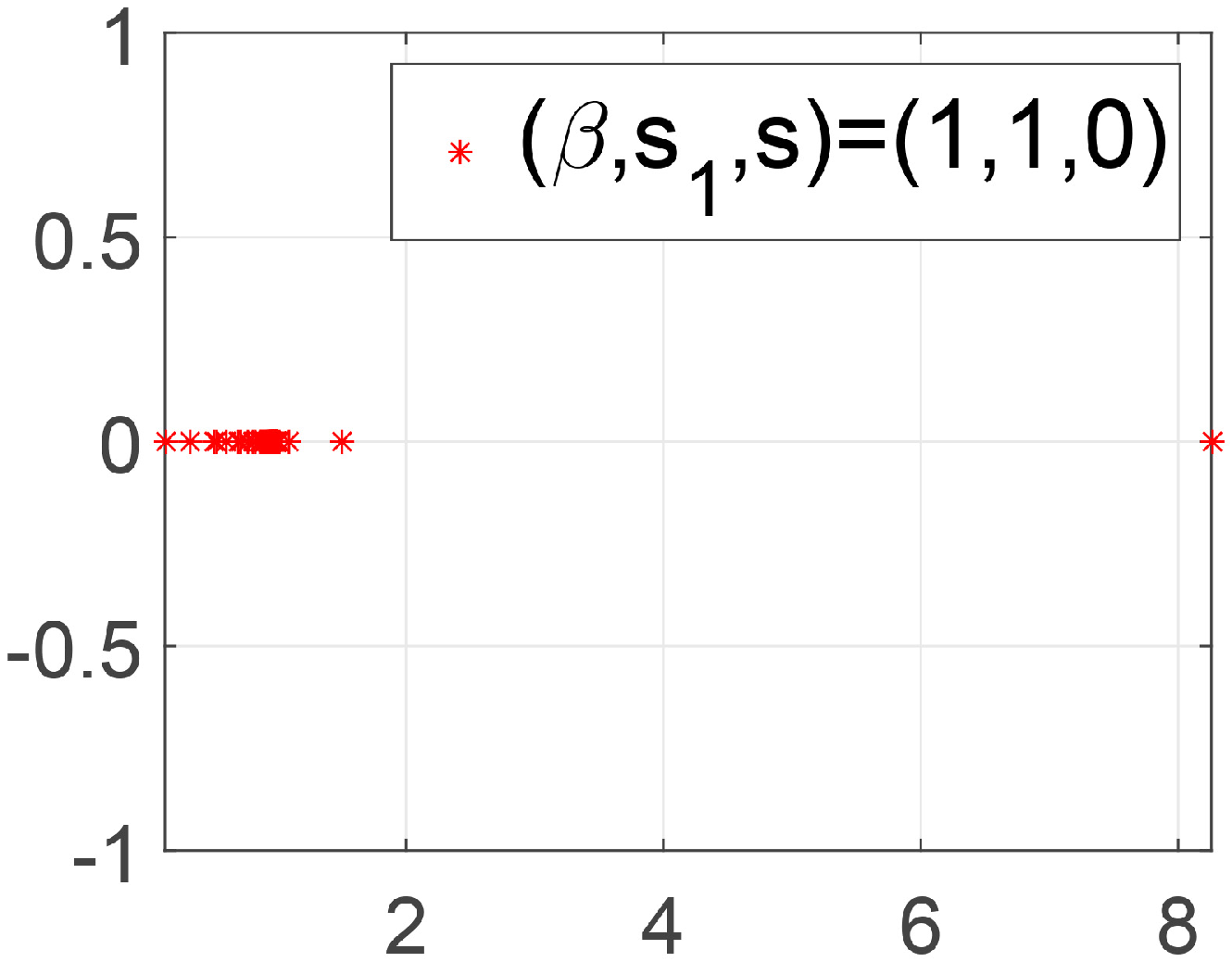}\\
\includegraphics[width=1.45in,height=1.2in,angle=0]{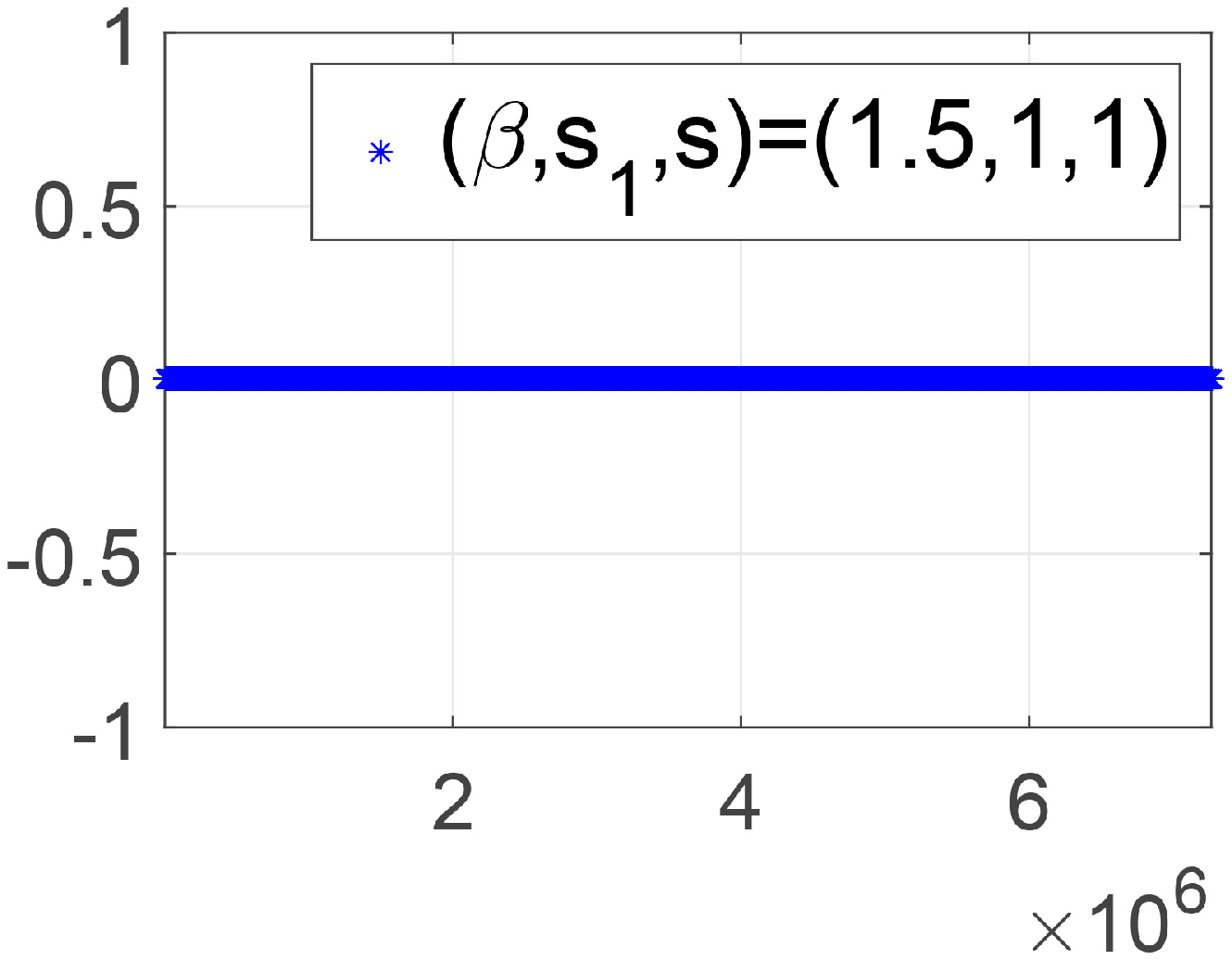}
\includegraphics[width=1.45in,height=1.2in,angle=0]{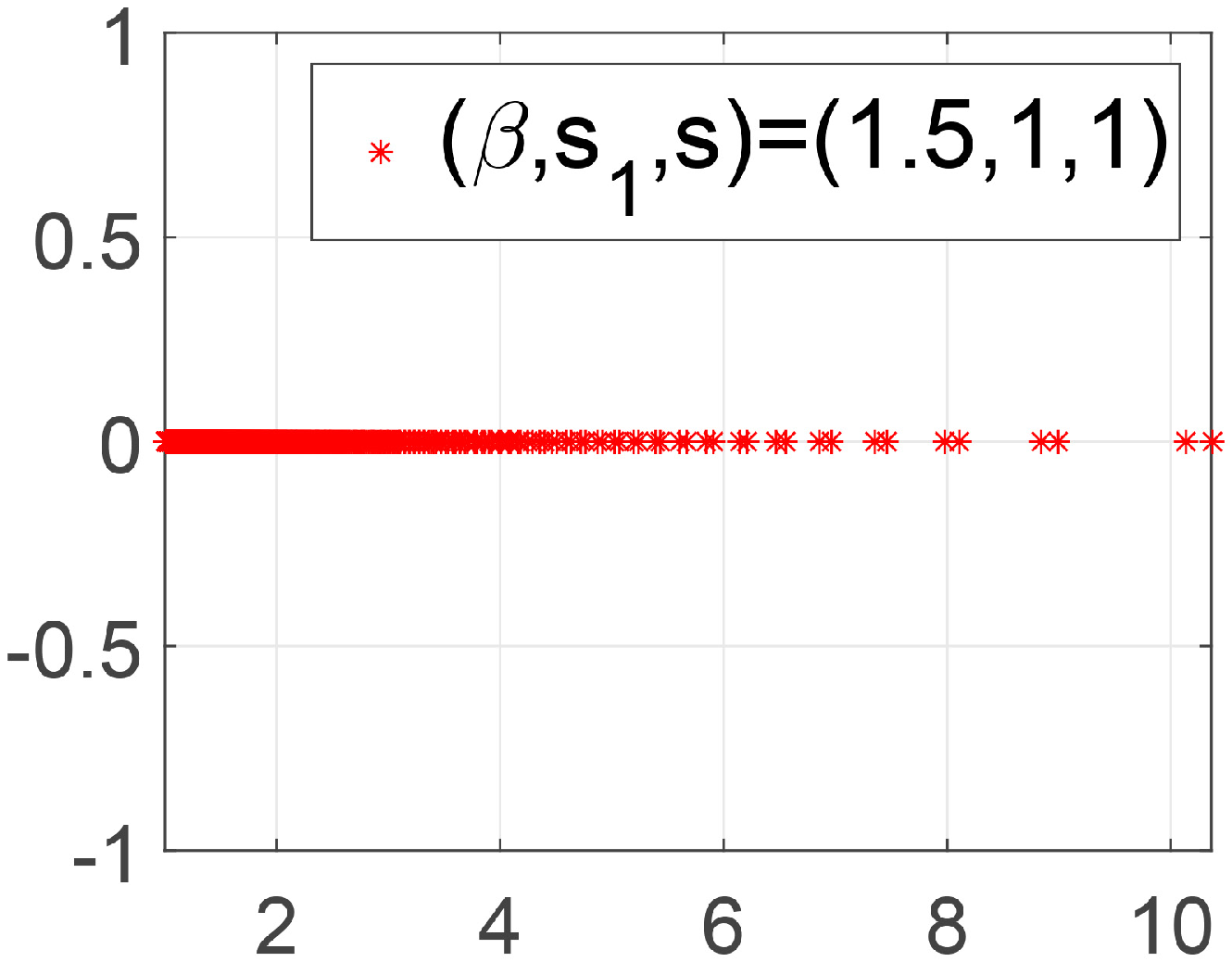}
\includegraphics[width=1.45in,height=1.2in,angle=0]{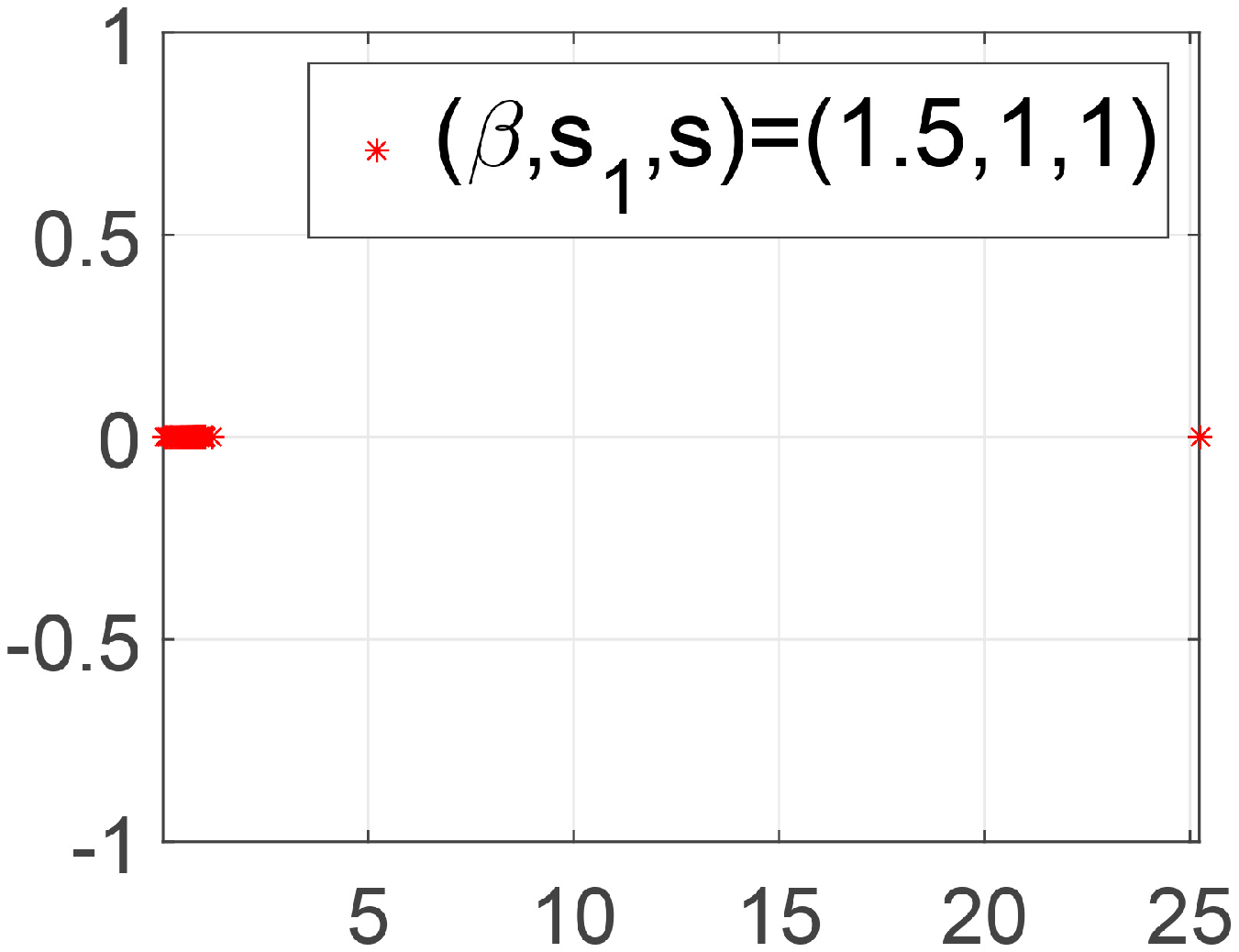}
\caption{Eigenvalue distribution of the systems (\ref{matrixform}) or (\ref{preconditionsystem}) in Example \ref{example1} with $J=13$ and $\lambda=3$.
 First column: without preconditioning; second column: preconditioned with the ichol factorization; third column: preconditioned with the  T. Chan circulant matrix.  The horizontal and vertical axes are respectively the real and imaginary axis.
 }\label{figure:1-1}
\end{center}
\end{figure}

\begin{example}\label{example2}
Consider model (\ref{Laplacedirichelt}) in  $\Omega=(0,1)$ with the boundary condition
\begin{eqnarray}
g(x)=(-2x)\chi_{[-\frac{1}{2},0]}+(2x-2)\chi_{[1,\frac{3}{2}]}
\end{eqnarray}
and    source term $f(x)$ being derived from the exact solution
\begin{eqnarray}
p(x)=(-2x)\chi_{[-\frac{1}{2},0]}+(x-x^2)^2 \chi_{(0,1)}+(2x-2)\chi_{[1,\frac{3}{2}]}.
\end{eqnarray}
\end{example}
Obviously, $u(x)$  is  discontinuous at $x=\frac{3}{2}$ and $x=-\frac{1}{2}$.  In the  numerical simulation,
for $\lambda=0$, the $f(x_i), \, d_1(i),\, d_2(i)$ are obtained exactly; for $\lambda \not=0$, they  are  calculated numerically
 with the techniques as in Example \ref{example1}.
 The numerical results are listed in  Table \ref{table:2-1}.
 Since  $u(x)$ is smooth enough
on $\bar{\Omega}$, the convergence rates are consistent with the theoretical predictions in Theorems \ref{theoremsection11} and \ref{theoremsection21}. In fact,   for $\lambda=0$ and $\beta\in (1,2)$, the numerical schemes obtained with $s_1=s=1$ seem to have a slightly bigger  convergence rate than $3-\beta$.

\begin{table}[!h t b p]\fontsize{6.8pt}{10pt}\selectfont
\begin{center}
 \caption{Errors and convergence rates  in Example \ref{example2} with $M=2^J$, solved by the PCG method with T. Chan's preconditioner.}
\begin{tabular}{cc|ccccc|ccccc}
  \hline
 $(\beta,\,s,\,s_1)$& $J$   &\multicolumn{5}{c|}{ $\lambda=0$ }    &\multicolumn{5}{|c}{$\lambda=3$  }\\
     &           & $L^2$-Err      & Rate      &$L^{\infty}$-Err  & rate  &iter   &$L^2$-Err  & Rate     &$L^\infty$-Err  & Rate &iter\\
   \hline

                  &$11$      &  1.1146e-06  &  --       & 1.6944e-06  & --   &8 &   5.9414e-06    &--       &  9.6168e-06   &--&9\\
   $(0.5,\,0,\,0)$    &$12$     &3.9593e-07  & 1.49    &6.0100e-07      & 1.50 & 8&2.1173e-06   & 1.49     &3.4273e-06    &1.49&9\\
                   &$13$     &1.4043e-07   &  1.50    & 2.1297e-07   &  1.50 & 8 &  7.5274e-07     &1.49      &1.2185e-06   &1.49 &10\\

 \\[-5pt]
                     &$11$     &   4.5687e-09   & --   & 6.7988e-09    &  --    & 8 & 4.8132e-08     & --      & 7.1820e-08  &--&9 \\
   $(0.5,\,1,\,1)$     &$12$     & 1.1421e-09   &  2,00    &1.6999e-09     & 2.01  &8   &1.2059e-08  &   2.00      &1.7999e-08    &2.00 &9\\
                    &$13$     & 2.8551e-10 &  2.00   &  4.2566e-10      & 2.01  & 8&3.0193e-09    &  2.00     & 4.5084e-09   &2.00&9\\

 \hline

                    &$11$     &  3.7770e-06   &  --   & 6.2802e-06    &  --  &12&  4.5179e-05    & --        &  6.3557e-05  &--&13 \\
$(1.0,\,1,\,0)$        &$12$     &1.8917e-06  &  1.00     &3.1450e-06  &1.00  & 12 &  22.2684e-05   &0.99       &  3.1930e-05   &0.99&13\\
                    &$13$     & 9.4664e-07   & 1.00    & 1.5737e-06 & 1.00 &13    &1.1368e-05     &  1.00     &1.6007e-05    &1.00&14\\

 \\[-5pt]
                  &$11$     & 9.4496e-09  & --    & 51.3552e-08     & --   &12&1.6933e-07     & --    & 2.6506e-07   &--&13\\
                  &$12$     &  2.3615e-09 &  2.13   &3.3872e-09     &2.13   & 12  &4.5438e-08  &2.02        & 7.1225e-08     &2.02&13\\
$(1.0,\,1,1)$     &$13$      &5.9040e-10  & 2.12    &8.4681e-10    & 2.12&  13   &1.2138e-08    & 2.02    & 1.9048e-08   &2.02&14\\

 \hline

               &$11$  & 5.5961e-05      & --    & 9.0153e-05     & --  & 19  &9.7334e-05    & --    & 1.5346e-04     &--&20\\
                 &$12$   & 3.9590e-05  & 0.50   & 6.3783e-05 & 0.50   &  21 & 6.9986e-05   & 0.48      &  1.1035e-04     &0.48&22\\
 $(1.5,\,1,\,0)$ &$13$   &2.8003e-05   & 0.50   & 4.5116e-05     &0.50   &24 &4.9910e-05     &0.49      & 7.8692e-05     &0.49 &25\\

 \\[-5pt]
                     &$10$  &   6.2783e-08 &  --    & 8.7275e-08     &--       & 16       &---     &--                 &--     &-- &-- \\
                    &$11$  &  1.5532e-08&  2.01    &  2.1605e-08     &2.01    & 19  & 2.3015e-06   &1.49       &3.6244e-06  &-- &20 \\
 $(1.5,\,1,\,1)$      &$12$ & 3.8558e-09   & 2.01    & 5.3661e-09    & 2.01&     21  & 8.1576e-07      & 1.50     & 1.2851e-06    &1.50 &23\\
                    &$13$  &  9.5433e-10 &  2.01    &1.3282e-09     & 2.01   & 24  &2.8888e-07     & 1.50      &4.5524e-07   &1.50 &25 \\

                       \hline
\end{tabular} \label{table:2-1}
\end{center}
\end{table}

\begin{example}\label{example3}
Consider  model (\ref{Laplacedirichelt})  in $\Omega=(-r,r)$ with source term   $f(x)=1$ and absorbing boundary condition $g(x)=0$.
\end{example}
 When $\lambda=0$, the exact solution  is  \cite[Subsection 3.1]{Deng:17}
\begin{eqnarray}
u(x)=\frac{\sqrt{\pi}(r^2-x^2)^{\beta/2}}{2^{\beta}\Gamma(1+\beta/2)\Gamma(1/2+\beta/2)}~~~{\rm for}~x\in \Omega.
\end{eqnarray}
It is easy to see that $u(x)$  has a poor regularity at the boundaries of $\Omega$.
When $\lambda\not=0$, $u(x)$ cannot be obtained explicitly; the errors (i.e., the data under  $\tilde{L}^2$-Err and ${\tilde{L}}^\infty$-Err) under stepsize $h$ in Table \ref{table:3-1}  with the $L^2$ and $L^{\infty}$ norms are, respectively,
\begin{eqnarray}\label{example3errordef}
\left\|U_{h/2}-U_h\right\|~~{\rm and}~~\max_{1\le i\le M}\left|U_{h/2}-U_h\right|,
\end{eqnarray}
 and the convergence rates are measured by using there errors, where $h=\frac{2r}{M+1}$ with $M=2^J-1$.
The  numerical results in Table \ref{table:3-1} show that  the convergence rates are small for two  cases, being consistent with the results in \cite[Example 2]{Zhang:17}.

 In statistical physics, the solution $u(x)$ denotes the mean first exit time of a particle  starting at $x$ away
 from the given domain $\Omega$ \cite{Deng:16,Getoor:61}.
For $\lambda=0,\, 0.5$ and  $3$, the numerical solutions obtained  with $(s,s_1)=(1,1)$ and different values of $\beta=0.5, \,1,\,1.5, \, r=1, \,2, \,5$ are listed in Figures \ref{figure:3-1}, which show: for the same domain $\Omega$, the mean first exit time  increases with the increases of the value of $\lambda$; and when $\lambda>0$, for any fixed value of the starting point $x$, the mean exit times are shorter for larger values of $\beta$.

\begin{table}[!htbp]\fontsize{6.5pt}{10pt}\selectfont
\begin{center}
 \caption{Errors and convergence rates  in Example \ref{example3} with $M=2^J-1$, solved by the PCG method with  T. Chan's preconditioner.
 The data in the parentheses denote the rates obtained with the errors in (\ref{example3errordef}).}
\begin{tabular}{cc|cccc|cccc}
  \hline
     &      &\multicolumn{4}{c|}{$\lambda=0$ }    &\multicolumn{4}{|c}{$\lambda=3$  }\\

  $(\beta,\,s,\,s_1)$  &$M$    & $L^2$-Err  & Rates     &${L}^\infty$-Err  & rates    & $\tilde{L}^2$-Err  & Rate     &${\tilde{L}}^\infty$-Err  & rate   \\
   \hline%
                     &$11$     &3.5668e-03   & --             & 5.4682e-02      & --            &3.3886e-02       & --    & 2.2117e-01     &-- \\
    $(0.5,\,0,\,0)$    &$12$     &2.1205e-03  &  0.75\,(0.75)   & 4.5975e-02    &   0.25\,(0.25)   &1.6834e-02      &1.01   &1.8113e-01       &0.29 \\
                     &$13$     &1.2608e-03   &  0.75\,(0.75)     & 3.8658e-02      &  0.25\,(0.25)    &8.5734e-03    &0.97    &1.4951e-01     &0.28 \\


                    &$11$     &1.4675e-03    & --            &  2.0163e-02     & --            &1.3158e-02     & --    &8.0461e-02     &-- \\
   $(0.5,\,1,\,1)$  &$12$     &8.7272e-04   &  0.75\,(0.75)  & 1.6952e-02      & 0.25\,(0.25)   &6.9321e-03     & 0.92 & 6.5320e-02     &0.30 \\
                  &$13$     & 5.1896e-04    & 0.75\,(0.75)   & 1.4254e-02      & 0.25\,(0.25)   &3.6595e-03    &0.92    &5.3565e-02     &0.29 \\

  \hline
                     &$11$     &6.3631e-04   & --              & 4.7004e-03      & --            & 1.6940e-03     & --    &  6.1020e-03     &-- \\
    $(1.0,\,1,\,0)$    &$12$     &3.3183e-04   &  0.94\,(0.94)   &3.3232e-03     & 0.50\,(0.50)   &8.7755e-04   &0.95  &4.2848e-03      &0.51 \\
                     &$13$     &1.7249e-04  &   0.94\,(0.94)  & 2.3497e-03    &   0.50\,(0.50)    &4.4946e-04    & 0.97    &3.0161e-03     &0.51 \\

                       \\[-5Pt]

                    &$11$     & 6.3631e-04   & --               &  4.7004e-03    & --        &1.6940e-03     & --    & 6.1020e-03     &-- \\
   $(1.0,\,1,\,1)$  &$12$     &3.3183e-04     &  1.08\,(1.08)  &  3.3232e-03  & 0.64\,(0.64)   &8.7755e-04     & 1.09  &  4.2848e-03    &0.65 \\
                  &$13$     &1.7249e-04   &  1.07\,(1.07)  & 2.3497e-03    &  0.63\,(0.63)   &4.4946e-04    & 1.09    &3.0161e-03     &0.63 \\
     \hline
                    & $11$     &   1.6135e-03  & --   & 1.3157e-03             & --              &2.7348e-03      & --    &  2.4835e-03     &-- \\
    $(1.5\,\,1,\,0)$    &$12$     &1.1034e-03  & 0.55\,(0.58)   & 9.0676e-04  &  (0.54)\, (0.56)  &1.9986e-03    &0.45   & 1.8304e-03      &0.44 \\
                     &$13$     &7.6158e-04  &   0.53\,(0.56)  &6.2939e-04     &  (0.53)\, (0.54)   &1.4278e-03   & 0.49    &1.3146e-03    &0.48 \\

                       \\[-5pt]

                    &$11$     & 2.2426e-04   & --    &  5.4375e-04      & --    &1.8108e-04      & --    & 5.7640e-04     &-- \\
   $(1.5,\,1,\,1)$  &$12$     &1.1260e-04      &  0.99\,(0.99) & 3.2327e-04     & 0.75\,(0.75)   &1.1576e-04      & 0.65  & 3.4404e-04     &0.74 \\
                  &$13$     &5.6465e-05  &  1.00\,(1.00)   & 1.9220e-04     &  0.75\,(0.75)  &6.9126e-05     & 0.74    &2.0489e-04     &0.75 \\

                    \hline

\end{tabular}\label{table:3-1}
\end{center}
\end{table}

\begin{figure}[!htbp]
\centering
\includegraphics[width=1.6in,height=1.6in,angle=0]{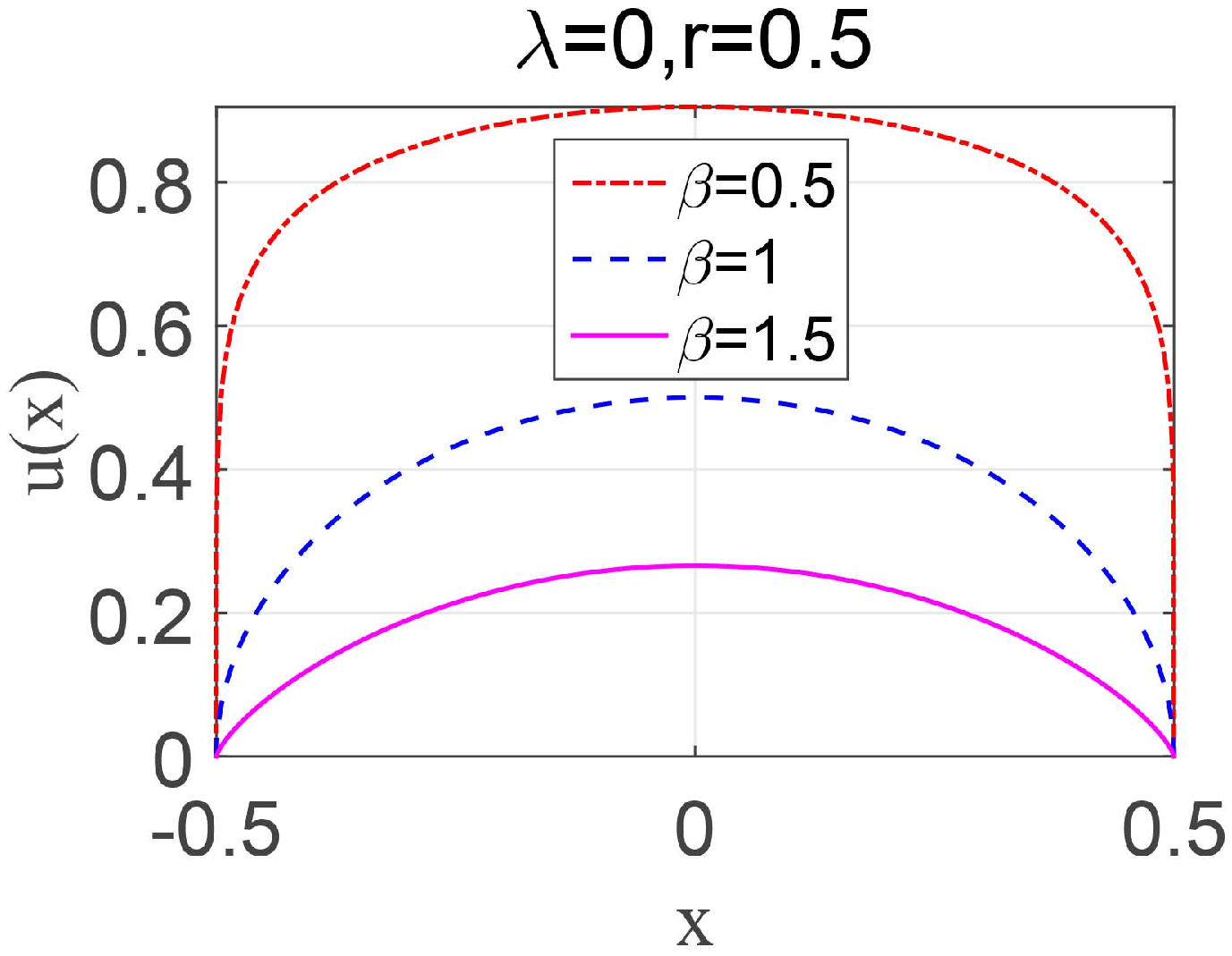}
\includegraphics[width=1.6in,height=1.6in,angle=0]{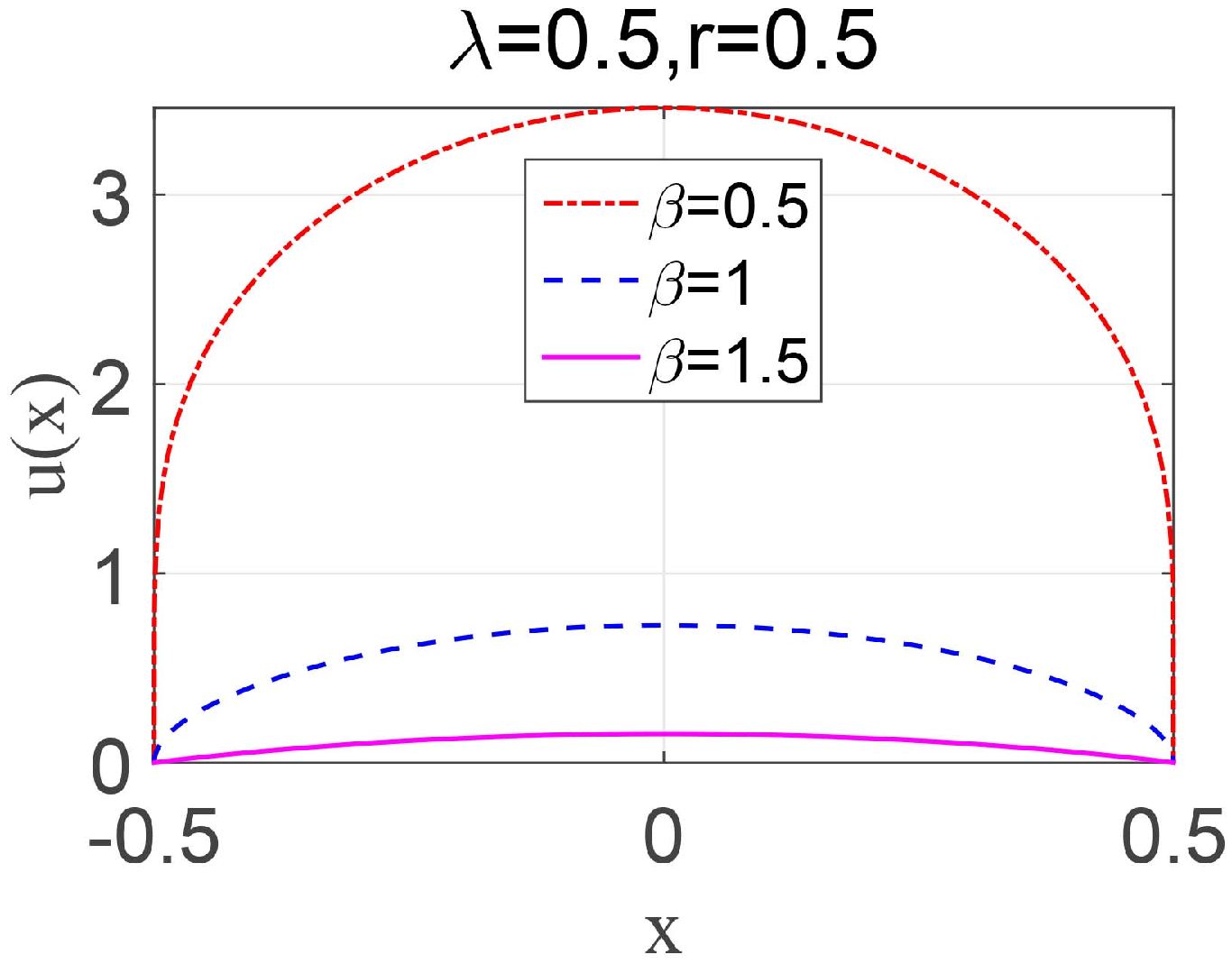}
\includegraphics[width=1.6in,height=1.6in,angle=0]{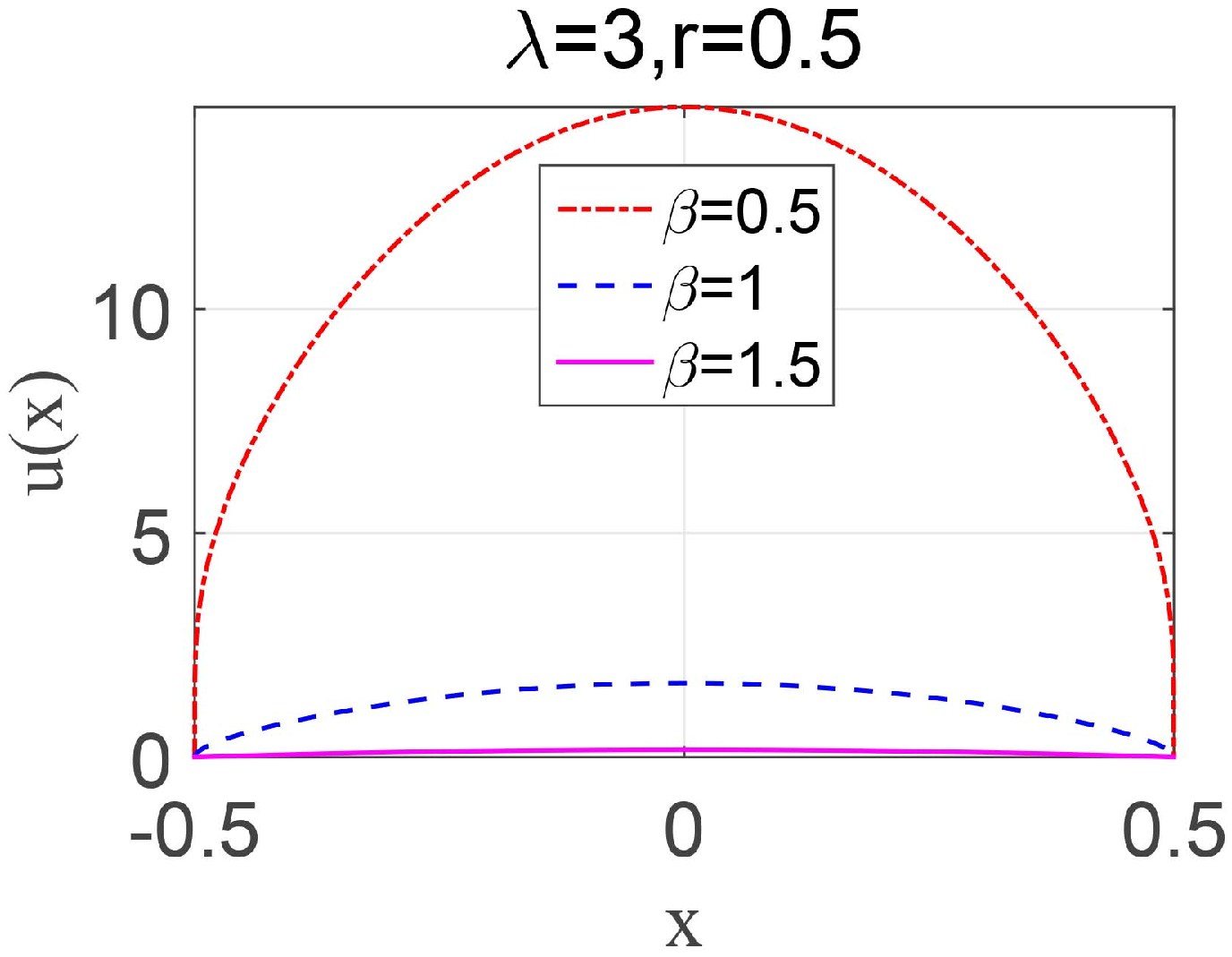}\\
\includegraphics[width=1.6in,height=1.6in,angle=0]{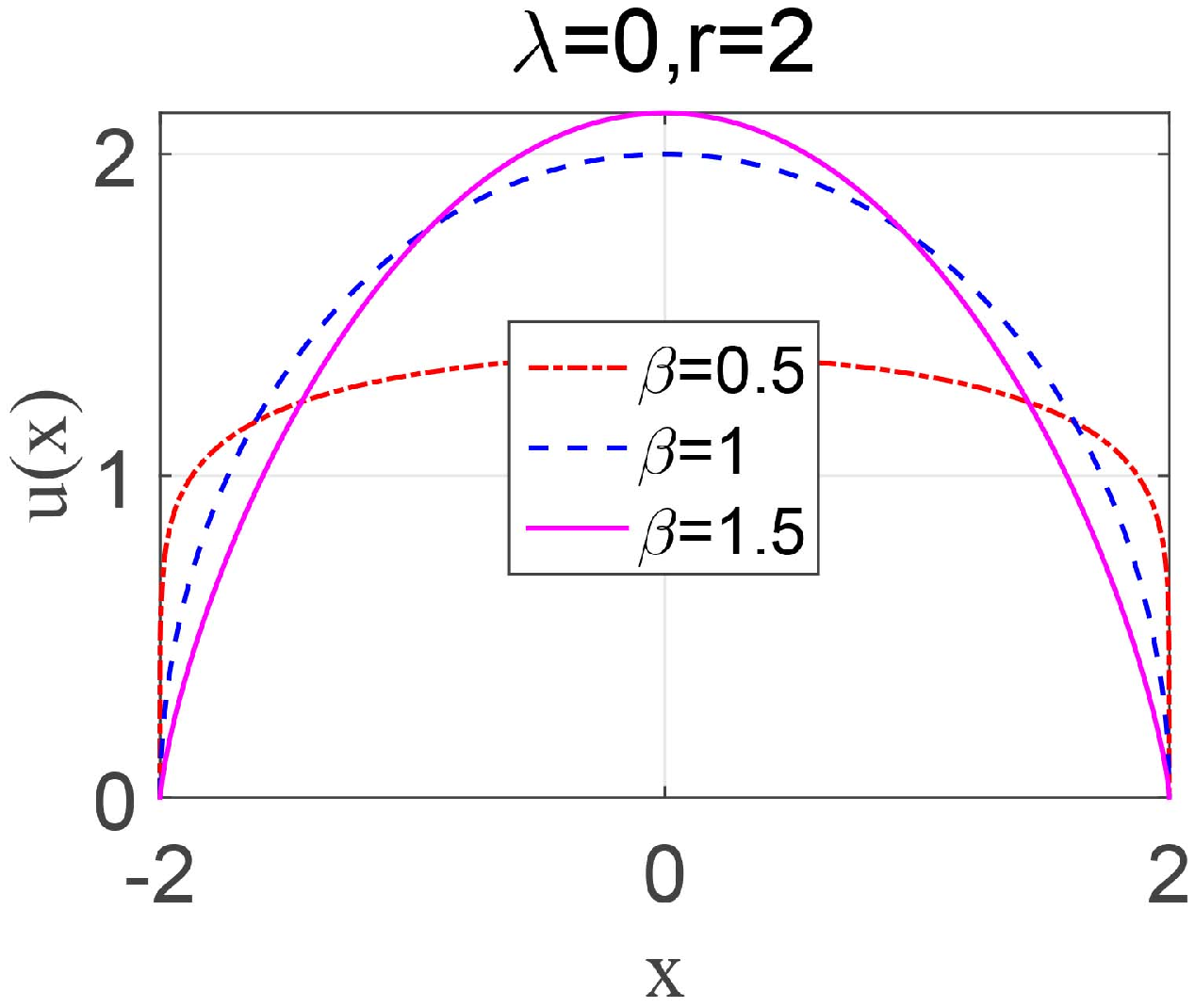}
\includegraphics[width=1.6in,height=1.6in,angle=0]{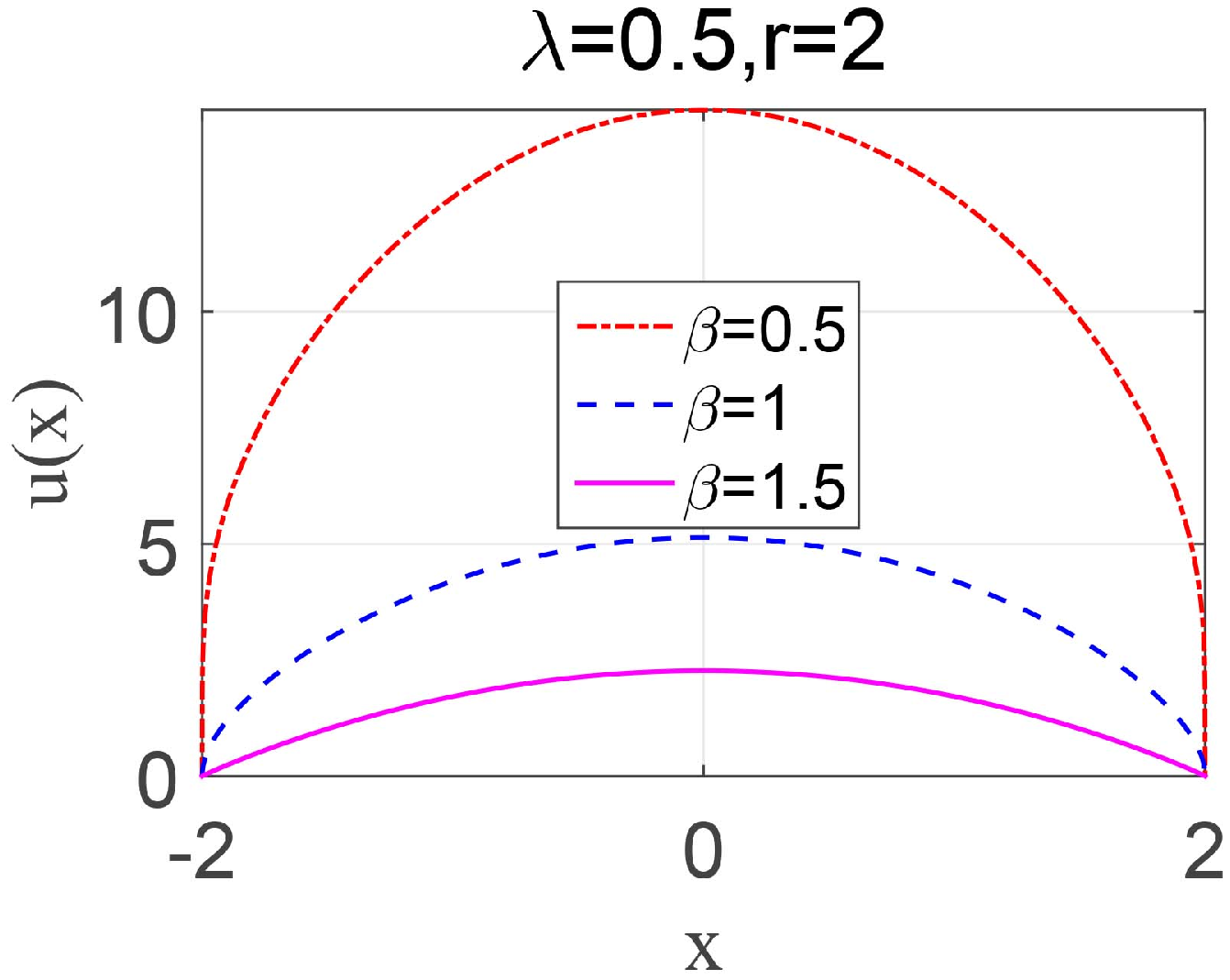}
\includegraphics[width=1.6in,height=1.6in,angle=0]{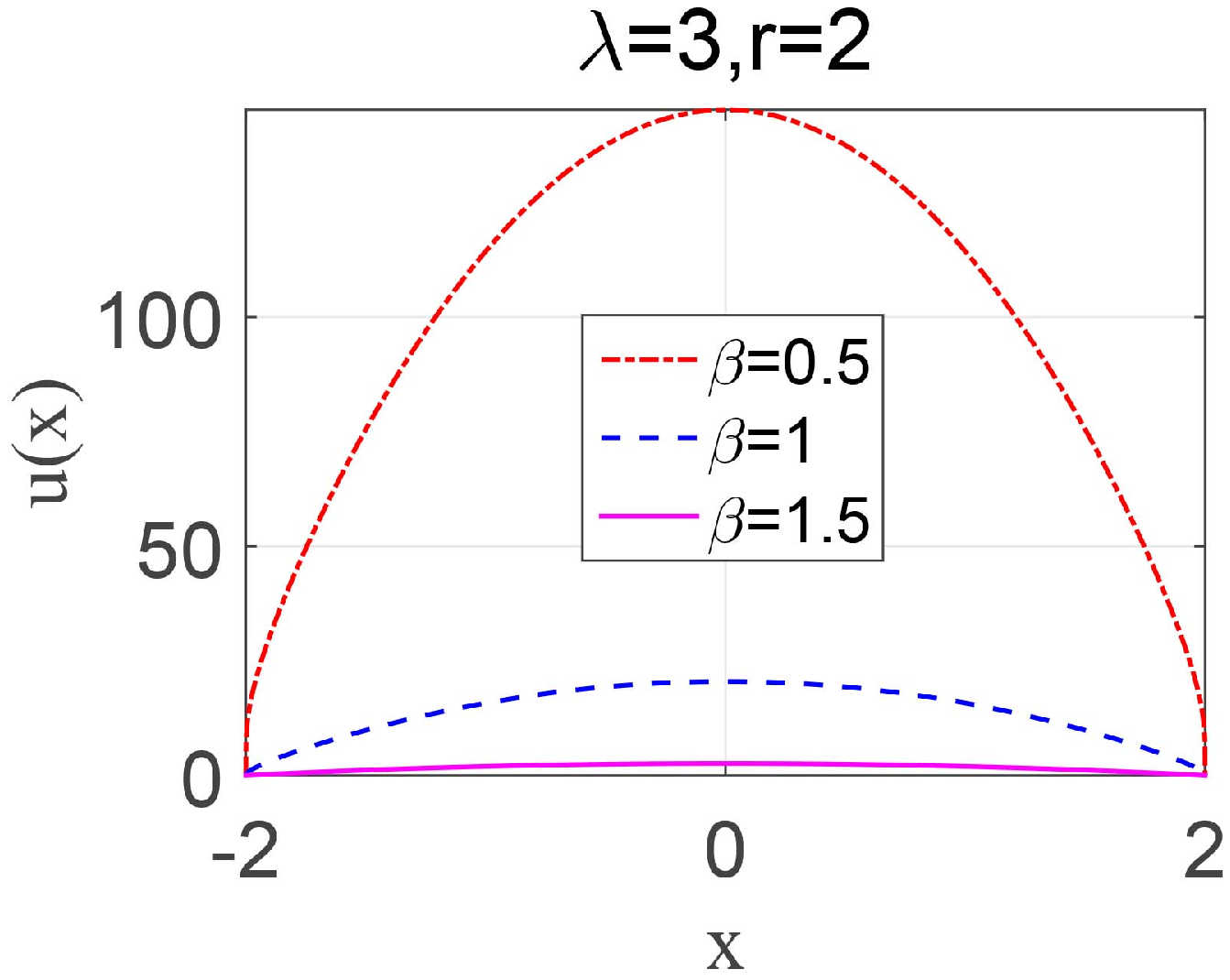}\\
\includegraphics[width=1.6in,height=1.6in,angle=0]{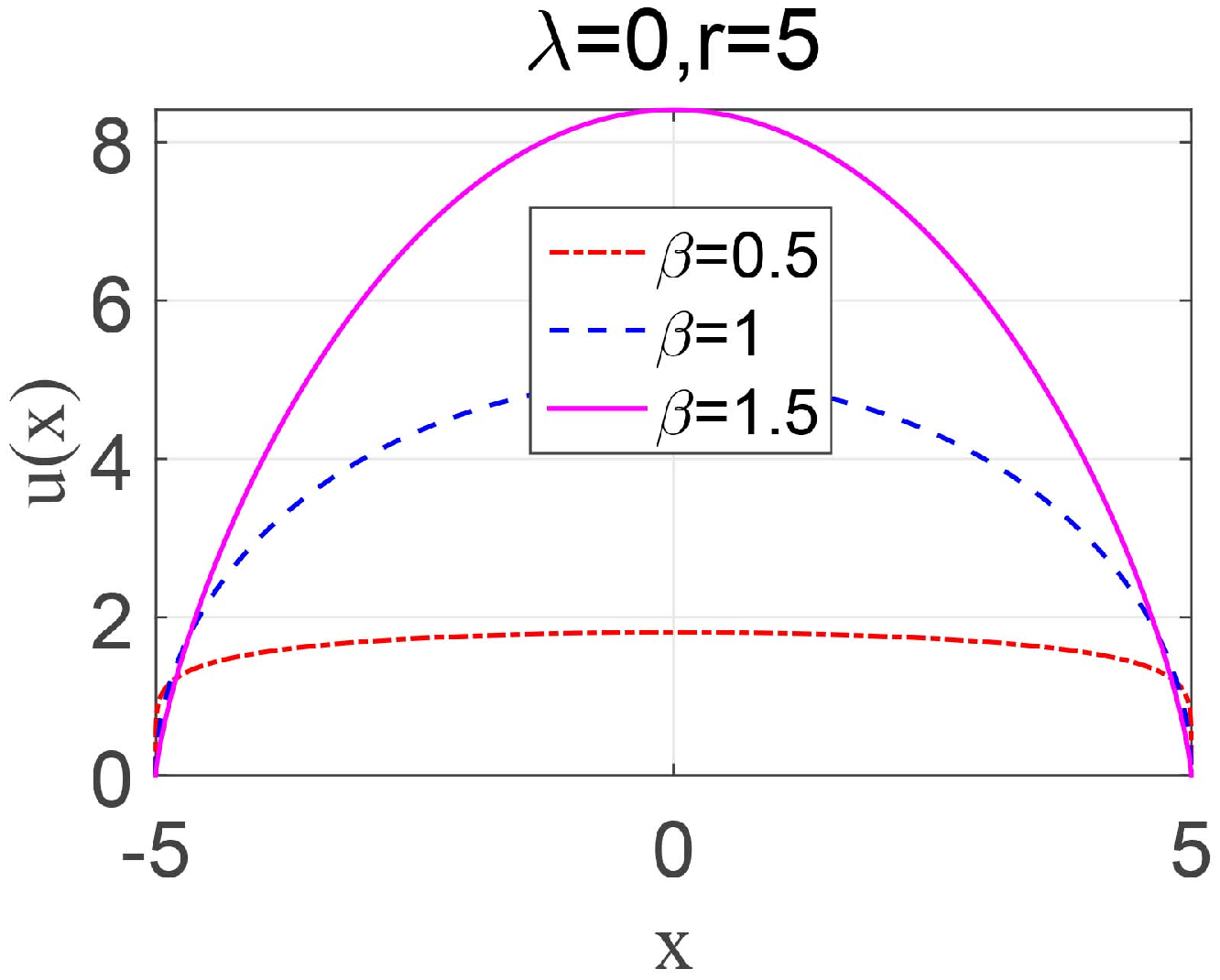}
\includegraphics[width=1.6in,height=1.6in,angle=0]{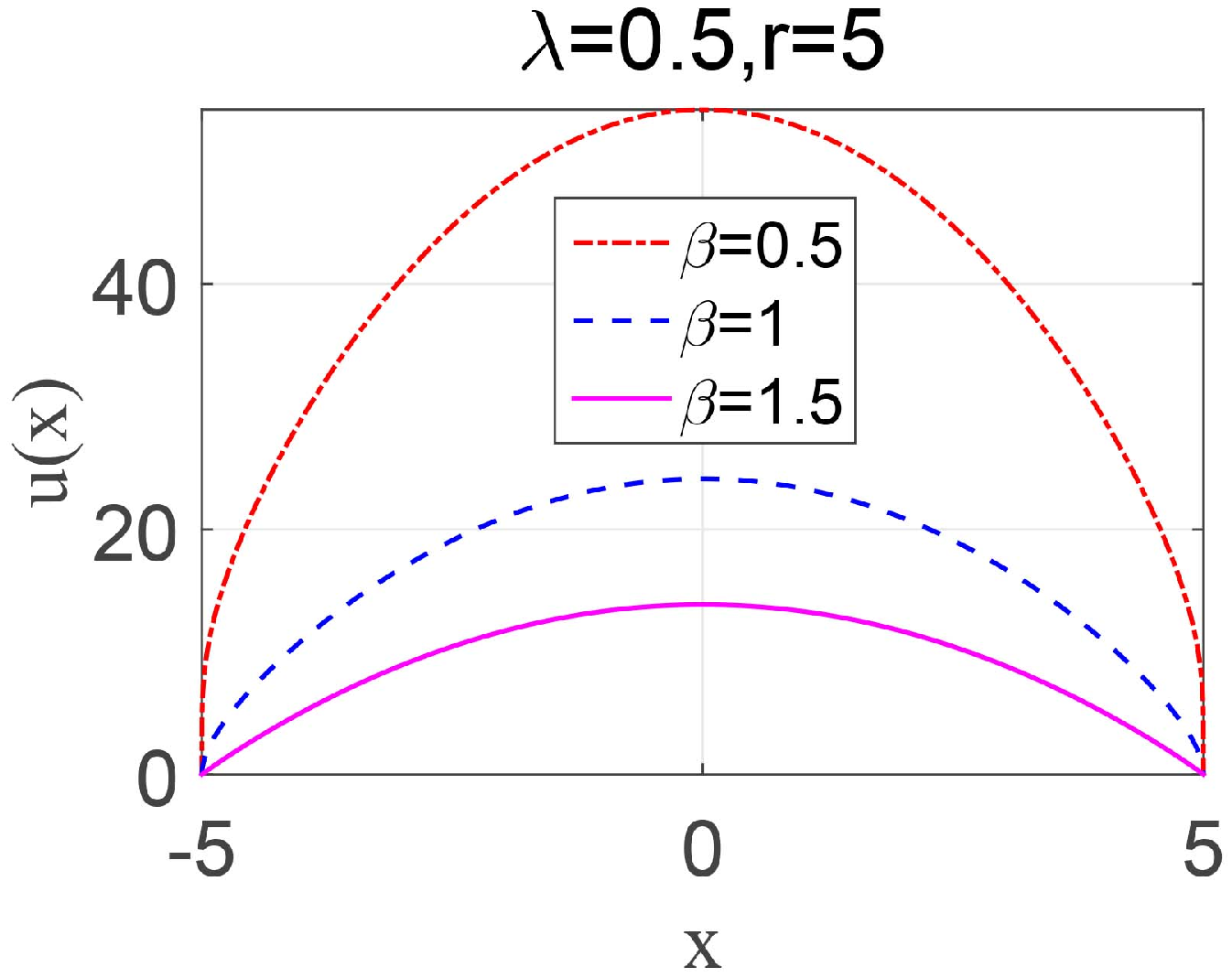}
\includegraphics[width=1.6in,height=1.6in,angle=0]{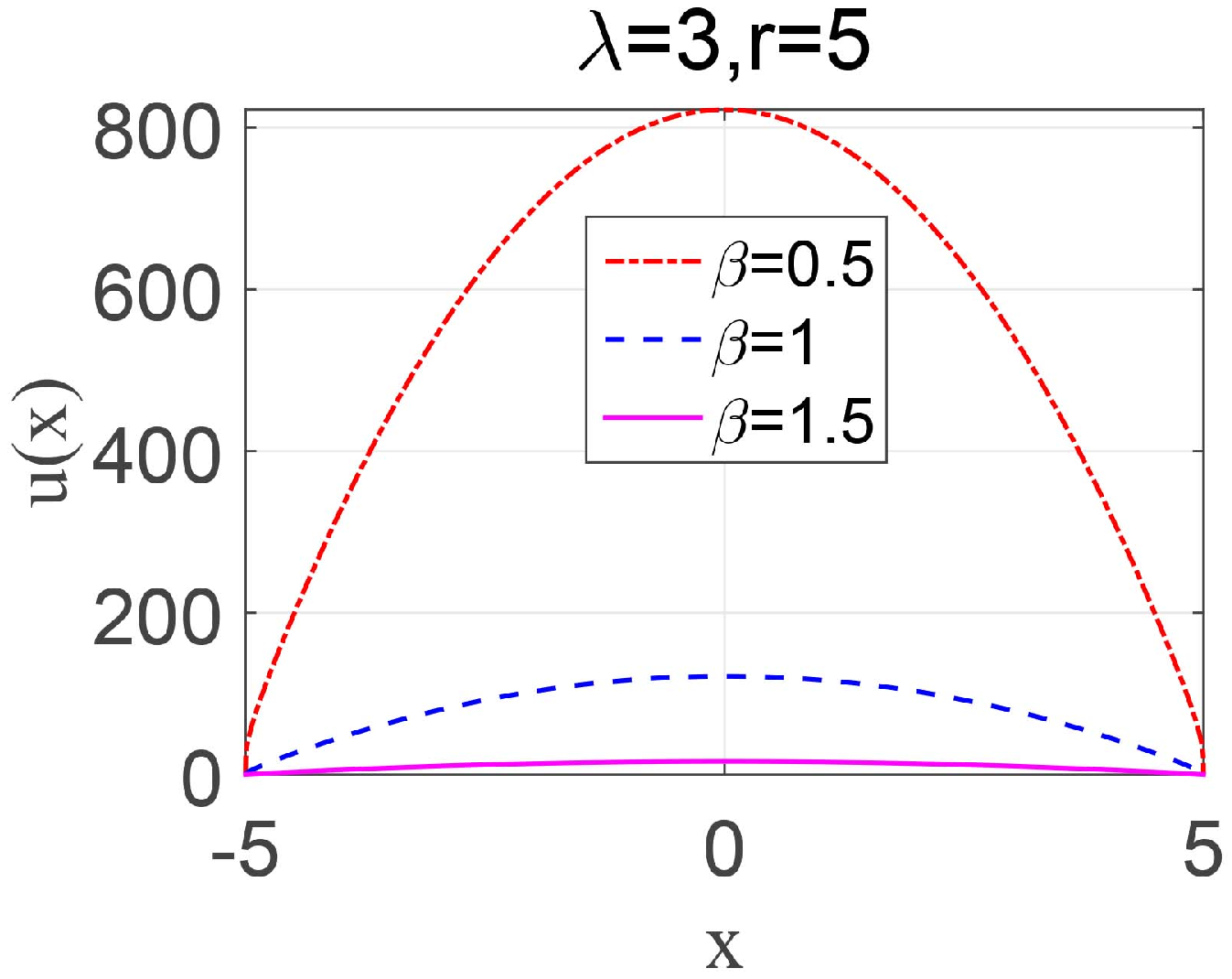}
\caption{Dependence of the mean exit time $u(x)$ on the size of domain $\Omega$, and the values of $\beta$ and $\lambda$.}
\label{figure:3-1}
\end{figure}

\section{Conclusion}\label{sec5}
The tempered fractional Laplacian is a recently introduced operator for treating the weaknesses of the fractional Laplacian in some physical processes. This paper provides several classes of finite difference schemes for the tempered fractional Laplacian equation with $\beta \in (0,2)$. According to the regularity of the solution, one can choose the more appropriate numerical schemes. The detailed numerical analyses are performed, and the effective preconditioning techniques are provided. The implementation details are also discussed, including that the  entries of the stiffness matrix can be explicitly and conveniently calculated and the stiffness matrix has Toeplitz-like structure. The efficiencies of the algorithms are verified by extensive numerical experiments and the desired convergence rates are confirmed.


\section*{Acknowledgments}
This work was supported by the National Natural Science Foundation of China under Grant Nos. 11671182 and 11701456, and the Fundamental Research Funds for the Central Universities under Grant No. lzujbky-2017-ot10.


\end{document}